\renewcommand{\mod}[1]{{\ifmmode\text{\rm\ (mod~$#1$)}\else\discretionary{}{}{\hbox{ }}\rm(mod~$#1$)\fi}}
\newcommand{\ep}{\varepsilon}
\newcommand{\tr}{\mathop{\mathrm{tr}}}
\newcommand{\disc}{\mathop{\mathrm{disc}}}
\newcommand{\one}[1]{\mathbf{1}\{#1\}}
\newcommand{\M}{\mathcal{M}}
\newcommand{\Ntwo}{\mathcal{M}_2([-1,1])}
\newcommand{\R}{{\mathbb R}}
\newcommand{\Z}{{\mathbb Z}}
\renewcommand{\P}{{\rm Pr}}
\newtheorem{theorem}{Theorem}
\newtheorem{proposition}[theorem]{Proposition}
\newtheorem{corollary}[theorem]{Corollary}
\newtheorem{lemma}[theorem]{Lemma}
{
\theoremstyle{remark}
\newtheorem*{remark}{Remark}
}
\begin{document}

\title{The number of $2\times2$ integer matrices having a prescribed integer~eigenvalue}
\author{Greg Martin and Erick B.\ Wong} 
\address{Department of Mathematics \\ University of British Columbia \\ Room
121, 1984 Mathematics Road \\ Canada V6T 1Z2}
\email{gerg@math.ubc.ca and erick@math.ubc.ca}
\subjclass[2000]{Primary 15A36, 15A52; secondary 11C20, 15A18, 60C05.}
\maketitle

\begin{abstract}
Random matrices arise in many mathematical contexts, and it is natural to ask about the properties that such matrices satisfy.  If we choose a matrix with integer entries at random, for example, what is the probability that it will have a particular integer as an eigenvalue, or an integer eigenvalue at all? If we choose a matrix with real entries at random, what is the probability that it will have a real eigenvalue in a particular interval? The purpose of this paper is to resolve these questions, once they are made suitably precise, in the setting of $2\times2$ matrices.
\end{abstract}

\section{Introduction}

Random matrices arise in many mathematical contexts, and it is natural to ask about the properties that such matrices satisfy. If we choose a matrix with integer entries at random, for example, we would like to know the probability that it has a particular integer as an eigenvalue, or an integer eigenvalue at all. Similarly, if we choose a matrix with real entries at random, we would like to know
the probability that it has a real eigenvalue in a particular interval. Certainly the 
answer depends on the probability distribution from which the matrix entries are drawn.

In this paper, we are primarily concerned with uniform distribution, so for both integer-valued and real-valued cases we must restrict the entries to a bounded interval.  In an earlier paper \cite{MW}, the authors show that random $n\times n$ matrices of integers almost never have integer eigenvalues. An explicit calculation by Hetzel, Liew, and Morrison \cite{HLM} shows that a $2\times2$ matrix with entries independently chosen uniformly from $[-1,1]$ has real eigenvalues with probability $49/72$. This calculation gives hope that our more precise questions about eigenvalues of a particular size might be accessible in the $2\times2$ setting. The purpose of this paper is to resolve these questions, once we make them suitably precise.

For an integer $k \ge 1$, let $\M_2(k)$ denote the uniform probability space of $2\times2$ matrices of integers with absolute value at most $k$.  Note that $|\M_2(k)| = (2k+1)^4 = 16k^4 + O(k^3)$.  We will obtain exact asymptotics for the number of matrices in $\M_2(k)$ having integer eigenvalues and, more precisely, for the number of matrices with a given integer eigenvalue~$\lambda$.

For any integer $\lambda$, define
\[
\M^\lambda_2(k) = \{M \in \M_2(k)\colon \lambda \text{ is an eigenvalue of } M\},
\]
and let
\[
\M^{\Z}_2(k) = \bigcup_{\lambda \in \Z} \M^{\lambda}_2(k).
\]

\begin{theorem}
Define the function $V:[-2,2]\to\R$ by $V(-\delta)=V(\delta)$ and
\begin{equation}
V(\delta) = \begin{cases}
4 - 2\delta - \delta^2 + \delta^2 \log(1+\delta) - 2(1-\delta)\log(1-\delta),
& \text{if }0 \le \delta < 1, \\
1+\log2,
& \text{if }\delta=1, \\
4 - 2\delta - \delta^2 + \delta^2 \log(\delta+1) + 2(\delta-1)\log(\delta-1),
& \text{if }1 < \delta \le \sqrt2, \\
\delta^2 - 2\delta - (\delta^2-2\delta+2) \log(\delta-1)
& \text{if }\sqrt{2} < \delta \le 2
\end{cases}
\label{definition of V(delta)}
\end{equation}
(where $\log$ is the natural logarithm). Then for any integer $\lambda$ between $-2k$ and $2k$,
\begin{equation}
|\M^\lambda_2(k)| = \frac{24V(\lambda/k)}{\pi^2} k^2 \log k + O(k^2),
\label{how many matrices have eigenvalue lambda}
\end{equation}
where the implied constant is absolute.  On the other hand, if $|\lambda|>2k$ then $\M^\lambda_2(k)$ is empty.
\label{theorem: particular integer eigenvalue} 
\end{theorem}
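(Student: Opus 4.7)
My plan is to translate the eigenvalue condition $\det(M-\lambda I)=0$ into the factorization equation $(a-\lambda)(d-\lambda)=bc$ and count solutions via divisor sums. The symmetry $M\mapsto -M$ exchanges eigenvalues $\lambda$ and $-\lambda$, so it suffices to treat $0\le\lambda\le 2k$; set $\delta=\lambda/k\in[0,2]$. If $\lambda>2k$, then for $a,d\in[-k,k]$ one has $(a-\lambda)(d-\lambda)\ge(\lambda-k)^2>k^2\ge|bc|$, so $\M^\lambda_2(k)=\emptyset$. Otherwise substitute $u=a-\lambda$ and $v=d-\lambda$, which range over $I_\lambda:=[-(1+\delta)k,(1-\delta)k]$, and group the quadruples $(u,v,b,c)$ by the common value $n=uv=bc$. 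The $n=0$ contribution (where $u=0$ or $v=0$, and independently $b=0$ or $c=0$) contains $O(k^2)$ quadruples and is absorbed into the error.

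For $n\ne0$, the numbers of $(u,v)\in I_\lambda^2$ and $(b,c)\in[-k,k]^2$ with product $n$ are divisor counts in specified intervals. Writing $D(n;K):=\#\{d\mid n:n/K\le d\le K\}$ for the ``symmetric'' count associated with $[-K,K]^2$, and introducing $D'(n;A,B):=\#\{d\mid n:d\le A,\ n/d\le B\}$ for opposite-sign pairs in $[-B,A]^2$ with $A,B>0$, one expresses $|\M^\lambda_2(k)|$ as a fixed linear combination (with coefficients $2$ or $4$) of sums of the form $\sum_n D(n;A)D(n;B)$ and $\sum_n D(n;A)D'(n;B,C)$, with $A,B,C\in\{k,(1-\delta)k,(1+\delta)k\}$. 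To evaluate such a sum, I interpret it combinatorially: $\sum_n D(n;A)D(n;B)$ counts positive-integer quadruples $(x,y,u',v')$ with $xy=u'v'$, $x,y\le A$, $u',v'\le B$. Writing $g=\gcd(x,u')$, $x=gx_1$, $u'=gu_1$ with $\gcd(x_1,u_1)=1$, the equation forces $y=u_1y_1$ and $v'=x_1y_1$ for some $y_1\ge 1$, so the count becomes
\[
\sum_{\substack{x_1,u_1\ge1\\\gcd(x_1,u_1)=1}}\lfloor\min(A/x_1,B/u_1)\rfloor\lfloor\min(A/u_1,B/x_1)\rfloor.
\]
Replacing floors by their real values contributes $O(k^2)$ error; M\"obius inversion on the coprimality constraint produces the factor $1/\zeta(2)=6/\pi^2$; and approximating the resulting unconstrained sum by a two-dimensional integral over $(x_1,u_1)\in[1,A]\times[1,B]$, which splits into pieces according to the ordering of $x_1,u_1$ relative to $A/B$, yields the main term of size $k^2\log k$.

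The final step is to combine the three main sums with their coefficients and case-split on $\delta$. For $\delta\in[0,1)$ both signs of $n$ contribute, involving $K_1=(1-\delta)k>0$ and $K_2=(1+\delta)k$. At $\delta=1$ the interval $I_\lambda$ becomes one-sided; for $\delta\in(1,2]$ one has $u,v<0$ throughout, so only $n>0$ appears and $|u|,|v|$ are forced into the window $[(\delta-1)k,(\delta+1)k]$, producing a ``divisors in a window'' variant of the sum. The secondary transition at $\delta=\sqrt2$ arises from comparing $LU=(\delta^2-1)k^2$ with $k^2$: when $\delta\le\sqrt2$ the range $[L^2,U^2]$ of $n=|u||v|$ meets $[1,k^2]$ on both sides of the value $LU$, contributing two subregions; when $\delta>\sqrt2$, the external constraint $n\le k^2$ cuts off one of them. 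The main obstacle will be executing these piecewise integrations accurately to reproduce the four explicit branches of $V(\delta)$, and in particular keeping the $O(k^2)$ error uniform in $\delta\in[0,2]$ with an absolute implied constant---especially near the transition points $\delta=1$ and $\delta=\sqrt2$, where several sub-regions of integration shrink to zero and careful bounding is required to prevent the constant from blowing up.
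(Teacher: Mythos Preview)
Your approach is correct and is essentially the same as the paper's: your gcd-parametrization of quadruples $(x,y,u',v')$ with $xy=u'v'$ is exactly the paper's structure lemma for singular $2\times2$ integer matrices (writing $M-\lambda I=\bigl(\begin{smallmatrix} ac & bc\\ ad & bd\end{smallmatrix}\bigr)$ with $(a,b)=1$), and the resulting sum over coprime pairs of a product of two ``min'' expressions, the M\"obius inversion yielding $6/\pi^2$, and the piecewise evaluation in $\delta$ with transitions at $1$ and $\sqrt2$ all match the paper's Lemmas~7--8 and Proposition~9. The only cosmetic difference is that you first group by the common product $n=uv=bc$ and then re-parametrize the divisor sum, whereas the paper parametrizes the singular matrix directly; the computations that remain are identical.
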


We remark that the function $V(\delta)$ is continuous and, with the exception of the points of infinite slope at $\delta=\pm1$, differentiable everywhere (even at $\delta=\pm2$, if we imagine that $V(\delta)$ is defined to be 0 when $|\delta|>2$). Notice that equation~\eqref{how many matrices have eigenvalue lambda} is technically not an asymptotic formula when $\lambda$ is extremely close to $\pm2k$, because then the value of $V(\lambda/k)$ can have order of magnitude $1/\log k$ or smaller, making the ``main term'' no bigger than the error term. However, equation~\eqref{how many matrices have eigenvalue lambda} is truly an asymptotic formula for $|\lambda| < 2k - \psi(k)k/(\log k)^{1/3}$, where $\psi(k)$ is any function tending to infinity (the exponent $1/3$ arises because $V(\delta)$ approaches 0 cubically as $\delta$ tends to~2 from below).

By summing the formula \eqref{how many matrices have eigenvalue lambda} over all possible values of $\lambda$, we obtain an asymptotic formula for $|\M^{\Z}_2(k)|$.  We defer the details of the proof to Section~\ref{enumeration}.
\begin{corollary}
Let $C = \big(7\sqrt{2} + 4 + 3\log(\sqrt{2}+1)\big)/3\pi^2 \approx 0.55873957$.  The probability that a randomly chosen matrix in $\M_2(k)$ has integer eigenvalues is asymptotically $C(\log k)/k$.  More precisely,
\[ |\M^{\Z}_2(k)| = 16Ck^3 \log k + O(k^3). \]
\label{corollary: all integer eigenvalues}
\end{corollary}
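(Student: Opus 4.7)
The plan is to sum the asymptotic formula \eqref{how many matrices have eigenvalue lambda} over $\lambda\in\Z\cap[-2k,2k]$ and account for the multiplicity with which each matrix in $\M^{\Z}_2(k)$ appears. The key combinatorial observation is that if $M\in\M_2(k)$ has an integer eigenvalue $\lambda$, then $\tr M-\lambda$ is automatically an integer and equals the other eigenvalue of $M$. Consequently, each matrix in $\M^{\Z}_2(k)$ with two distinct eigenvalues is counted twice in $\sum_\lambda|\M^\lambda_2(k)|$, while those with a (necessarily integer) repeated eigenvalue are counted once. Writing $R(k)$ for the number of the latter, we obtain
\[
\sum_{\lambda=-2k}^{2k}|\M^\lambda_2(k)| = 2|\M^{\Z}_2(k)| - R(k).
\]

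To handle $R(k)$, note that any contributing matrix with entries $a,b,c,d$ has vanishing discriminant $(a-d)^2+4bc=0$. Setting $a-d=2m$ with $|m|\le k$, the condition reads $bc=-m^2$; combining the $O(k)$ pairs $(a,d)$ with $a-d=2m$ with the divisor count of $m^2$ and using the standard estimate $\sum_{m\le k}\tau(m^2)\ll k(\log k)^2$ yields $R(k)=O(k^2(\log k)^2)$, which is swallowed by the $O(k^3)$ error of the target formula. Substituting Theorem~\ref{theorem: particular integer eigenvalue} into the displayed identity then gives
\[
|\M^{\Z}_2(k)| = \frac{12k^2\log k}{\pi^2}\sum_{\lambda=-2k}^{2k}V(\lambda/k) + O(k^3).
\]
Since $V$ is continuous on $[-2,2]$, with only a mild $(1-\delta)\log(1-\delta)$-type modulus of continuity at the corner points $\delta=\pm1$, the elementary Riemann sum comparison yields $\sum_\lambda V(\lambda/k)=k\int_{-2}^{2}V(\delta)\,d\delta+O(\log k)$, which is more than enough to reduce the problem to evaluating the integral.

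The final step is to compute $\int_{-2}^{2}V(\delta)\,d\delta = 2\int_0^2 V(\delta)\,d\delta$ in closed form, splitting at $\delta=1$ and $\delta=\sqrt{2}$ according to the four branches of \eqref{definition of V(delta)}. Each logarithmic piece succumbs to a single integration by parts against its polynomial prefactor, and the expected result is $\int_0^2 V(\delta)\,d\delta = \tfrac{2}{9}\bigl(7\sqrt{2}+4+3\log(\sqrt{2}+1)\bigr)$, which upon substitution recovers $16Ck^3\log k$ with the stated value of $C$. The only real obstacle is the bookkeeping: several bulky logarithmic expressions from the three subintervals must cancel before collapsing into this clean closed form.
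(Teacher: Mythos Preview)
Your argument is correct and follows essentially the same route as the paper: relate $\sum_\lambda|\M^\lambda_2(k)|$ to $2|\M^\Z_2(k)|$ modulo the repeated-eigenvalue contribution, bound the latter, substitute Theorem~\ref{theorem: particular integer eigenvalue}, and pass from the Riemann sum to $\int_{-2}^2 V(\delta)\,d\delta$. The only cosmetic differences are that the paper invokes Lemma~\ref{lemma: double eigenvalue} for the $R(k)$ bound (giving $\ll_\ep k^{2+\ep}$) and appeals to bounded variation of $V$ for a slightly sharper Riemann-sum error, but your direct divisor estimate and $O(\log k)$ Riemann-sum error are equally sufficient.
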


If $M \in \M_2(k)$ has eigenvalue $\lambda$, then the scaled matrix $k^{-1} M$ has eigenvalue $\lambda/k$, which is the argument of $V$ that appears on the right-hand side of \eqref{how many matrices have eigenvalue lambda}.  Thus one interpretation of Theorem~\ref{theorem: particular integer eigenvalue} is that for large $k$, the rational eigenvalues of $k^{-1} M$ tend to be distributed like the function $V$.

Note that the entries of $k^{-1} M$ are sampled uniformly from a discrete, evenly-spaced subset of $[-1,1]$.   As $k \to \infty$ this probability distribution converges in law to the uniform distribution on the interval $[-1,1]$.  Let $\Ntwo$ denote the probability space of all $2\times2$ matrices whose entries are independent random variables drawn from this distribution.  One might ask whether the distribution given by Theorem~\ref{theorem: particular integer eigenvalue} is just a discrete approximation to the distribution of eigenvalues in $\Ntwo$; the answer, perhaps surprisingly, is no.  The next theorem provides this latter distribution.

\begin{theorem}
Define $W(\delta)$ to be the density function for real eigenvalues of matrices in $\Ntwo$: if $M$ is a randomly chosen matrix from $\Ntwo$, then the expected number of eigenvalues of $M$ in the interval $[s,t]$ is $\int_s^t W(\delta)\, d\delta$. Then $W(-\delta)=W(\delta)$ and
\begin{equation}
W(\delta) = 
\begin{cases}
(80 + 20\delta + 90\delta^2 + 52\delta^3 - 107\delta^4)/(144(1+\delta))
\\ \qquad{} - (5-7\delta+8\delta^2)(1-\delta) \log(1-\delta)/12
\\ \qquad{} - \delta(1-\delta^2) \log(1+\delta)/4,
  & \text{if $0 \le \delta \le 1$},\\
\delta(20+10\delta-12\delta^2-3\delta^3)/(16(1+\delta))
\\ \qquad{} + (3\delta-1)(\delta-1)\log(\delta-1)/4
\\ \qquad{} + \delta (\delta^2-1) \log(\delta+1)/4,
  & \text{if $1 \le \delta \le \sqrt{2}$},\\
\delta(\delta-2)(2-6\delta+3\delta^2)/(16(\delta-1))
\\ \qquad{}  - (\delta-1)^3 \log(\delta-1)/4,
  & \text{if $\sqrt{2} \le \delta \le 2$}, \\
0,
& \text{if }\delta\ge2.
\end{cases}
\label{definition of W(delta)}
\end{equation}
\label{theorem: distribution of real eigenvalues}
\end{theorem}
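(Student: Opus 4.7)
The strategy is to apply a Kac--Rice-type identity for the density of real roots of the characteristic polynomial $p_M(\lambda)=\det(\lambda I-M)=\lambda^2-(a+d)\lambda+(ad-bc)$ of $M=\bigl(\begin{smallmatrix}a&b\\c&d\end{smallmatrix}\bigr)\in\Ntwo$. Since $p_M$ almost surely has no repeated roots, for any interval $[s,t]$,
\[
\#\{\lambda\in[s,t]:p_M(\lambda)=0\}=\int_s^t|p_M'(\lambda)|\,\delta_0(p_M(\lambda))\,d\lambda
\]
with $\delta_0$ the Dirac mass at the origin; this is the standard Kac--Rice formula, understood rigorously via the expected counting function $F(\delta)=\mathbb{E}\#\{\lambda\le\delta:p_M(\lambda)=0\}$, whose derivative equals $W$. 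Taking expectations and using $p_M(\delta)=(a-\delta)(d-\delta)-bc$ together with $p_M'(\delta)=2\delta-a-d$ yields
\[
W(\delta)=\mathbb{E}\bigl[\,|2\delta-a-d|\,\delta_0\bigl((a-\delta)(d-\delta)-bc\bigr)\bigr].
\]

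To evaluate this, I condition on $(a,d)$ and integrate out $(b,c)$. Since $b$ and $c$ are independent uniform $[-1,1]$ variables, the product $bc$ has density $f_{bc}(t)=-\tfrac{1}{2}\log|t|$ for $|t|\le 1$, so
\[
W(\delta)=-\tfrac{1}{8}\int_{-1}^{1}\int_{-1}^{1}|2\delta-a-d|\,\log\bigl|(a-\delta)(d-\delta)\bigr|\,\one{|(a-\delta)(d-\delta)|\le 1}\,da\,dd.
\]
Substituting $u=a-\delta$ and $v=d-\delta$, writing $\log|uv|=\log|u|+\log|v|$, and using the symmetry $u\leftrightarrow v$ reduces this to
\[
W(\delta)=-\tfrac{1}{4}\iint_{R_\delta\cap\{|uv|\le 1\}}|u+v|\log|u|\,du\,dv,\qquad R_\delta=[-1-\delta,1-\delta]^2.
\]

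The remaining task is to compute this integral in closed form. The shape of $R_\delta\cap\{|uv|\le 1\}$ changes at $\delta=1$, where the vertex $(1-\delta,1-\delta)$ of $R_\delta$ crosses the origin, and at $\delta=\sqrt 2$, where the hyperbola $uv=1$ meets the vertices $(-1-\delta,1-\delta)$ and $(1-\delta,-1-\delta)$ of $R_\delta$. These transitions correspond to the three regimes of \eqref{definition of W(delta)}: for $\delta\in[0,1)$, only a small corner of $R_\delta$ near $(-1-\delta,-1-\delta)$ is excised; for $\delta\in[1,\sqrt 2]$, the square lies in the closed third quadrant and the excised corner is bounded by the hyperbola branch entering through the sides $u=-1-\delta$ and $v=-1-\delta$; for $\delta\in[\sqrt 2,2]$, the admissible region has shrunk to a wedge near the vertex $(1-\delta,1-\delta)$, bounded by the hyperbola now crossing from the sides $u=1-\delta$ and $v=1-\delta$. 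In each regime, once the absolute values are removed according to the signs of $u$, $v$, and $u+v$, the integral becomes a sum of iterated integrals of $u^{j}\log u$ with $j=0,1,2$, all of which have elementary antiderivatives.

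The main obstacle is the algebraic bookkeeping in this last step: each regime produces many polynomial and logarithmic terms that must be collected and simplified to match the exact expressions in \eqref{definition of W(delta)}. The denominators $1+\delta$ (in the first two cases) and $\delta-1$ (in the third) trace back to the coordinates $v=-1/(1+\delta)$ and $u=1/(1-\delta)$ at which the hyperbola meets the edges of $R_\delta$. The logarithmic singularity of $\log|u|$ along the axis $u=0$ interior to $R_\delta$ for $\delta\in[0,1)$ is integrable and presents no analytic difficulty, though one must verify that contributions from all four quadrants combine correctly; continuity of the resulting formulas at the interfaces $\delta=1$ and $\delta=\sqrt 2$ serves as a useful sanity check.
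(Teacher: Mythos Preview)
Your approach is sound and leads to the same result, but it is genuinely different from the paper's. The paper does not use a Kac--Rice identity for the density; instead it works with the cumulative function $F_W(-\delta)$, the expected number of eigenvalues of $M$ below $-\delta$. Shifting $M\mapsto M'=M+\delta I$, the paper uses elementary sign criteria (determinant and trace of $M'$) together with the distribution of the product $BC$ to write $F_W(-\delta)$ as a double integral of an explicit piecewise function $\nu(x,y)$ over the square $S_\delta=[-1+\delta,1+\delta]^2$. It then differentiates in $\delta$ via the fundamental theorem of calculus, reducing $W(\delta)=-\tfrac{d}{d\delta}F_W(-\delta)$ to \emph{one}-dimensional integrals along the top and bottom edges of $S_\delta$, which are evaluated with the elementary antiderivatives of $\nu_1,\nu_2$.

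Your route goes straight to the density via Kac--Rice and conditions on $(a,d)$, arriving at a \emph{two}-dimensional integral of $|u+v|\log|u|$ over $R_\delta\cap\{|uv|\le1\}$ with $R_\delta=[-1-\delta,1-\delta]^2$. The geometry you describe (hyperbola $uv=1$ meeting the square, with transitions at $\delta=1$ and $\delta=\sqrt2$) is exactly the mirror image of the paper's $S_\delta$/$xy=1$ picture, so the three regimes match. What the paper's method buys is that the differentiation step absorbs one integration: the final computation is a line integral along two edges rather than an area integral, so there is less to track when the absolute values are unpacked. What your method buys is directness and a cleaner conceptual link to random-root theory; it also avoids the somewhat ad hoc case split into $\nu_1$ versus $\nu_2$ coming from the sign of $x+y$. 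Both approaches require the same piecewise bookkeeping and the same elementary antiderivatives of $t^j\log|t|$; your proposal correctly identifies where the denominators $1+\delta$ and $\delta-1$ arise. The only thing left in your sketch is the actual evaluation in each regime, which is laborious but routine.
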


As in the case of $V(\delta)$, the function $W(\delta)$ is continuous and differentiable everywhere, with the exception of the points of infinite slope at $\delta=\pm1$. (The value $W(1)=15/32$ makes the function continuous there, although the value of a density function at a single point is irrelevant to the probability distribution.) It also shares the same cubic decay as $\delta$ tends to 2 from below.  However, there are obvious qualitative differences between the functions $V$ and $W$.  In Figure~\ref{figure} we plot $V$ and $W$ on the same axes, normalized so that the area under each is~2 (these normalized versions are denoted $U^\Z$ and $U^\R$ in our earlier paper \cite{MW}).  In the case of $\M_2(k)$, this normalization corresponds to conditioning on having integer eigenvalues, that is, scaling by the probability $C(\log k)/k$ from Corollary~\ref{corollary: all integer eigenvalues}.  For $\Ntwo$ we are 
conditioning on having real eigenvalues, which occurs with probability $49/72$ (this can be 
obtained by integrating $W(\delta)$, analogously to the proof of Corollary~\ref{corollary: all integer eigenvalues}; the computation by Hetzel, Liew, and Morrison \cite{HLM} is more direct).

\begin{figure}[b]
\centering
\includegraphics[width=4in]{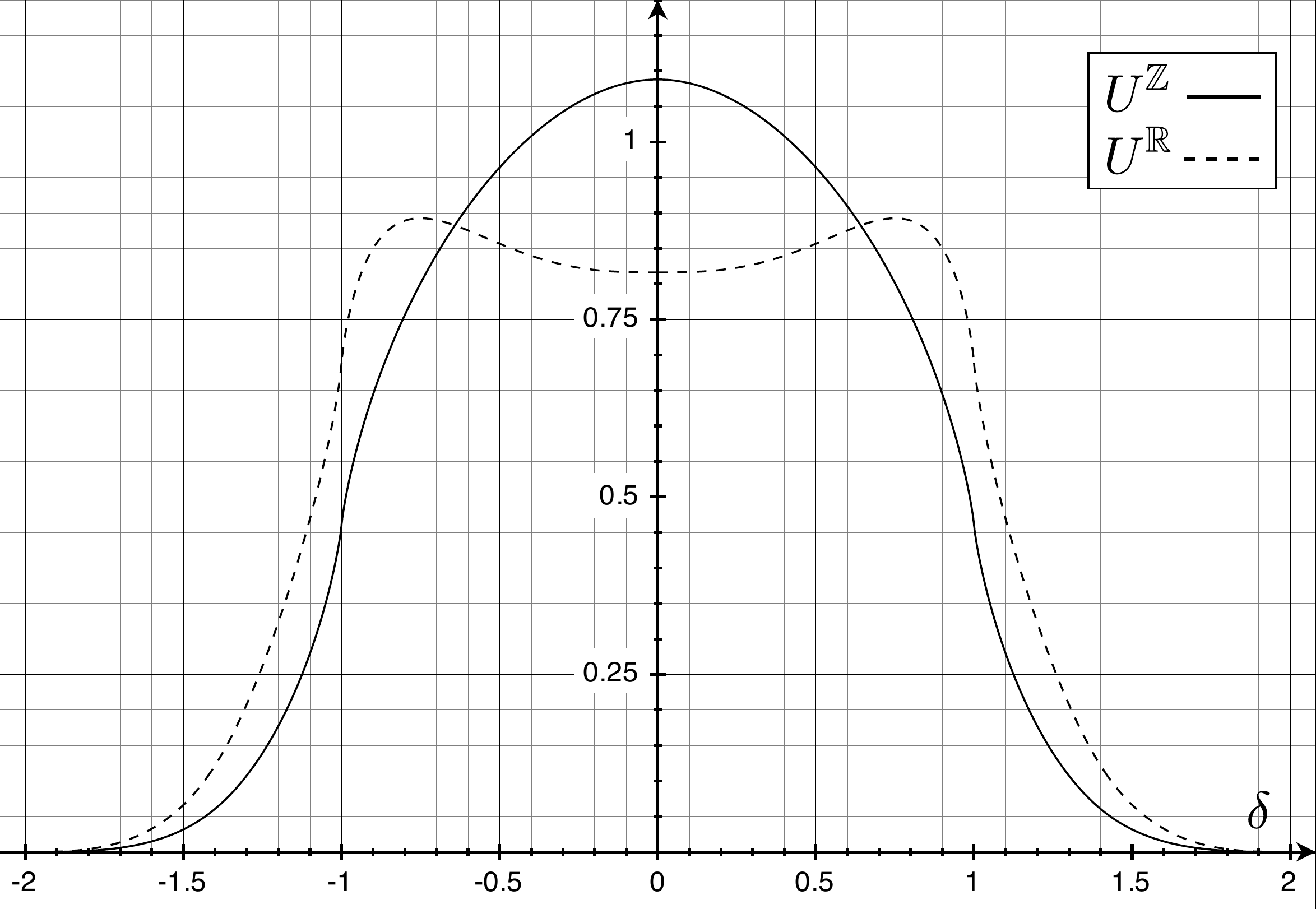}
\caption{Graph of $U^\Z$ ($V$ normalized) versus $U^\R$ ($W$ normalized)}
\label{figure}
\end{figure}

Note that the distribution $W(\delta)$ is bimodal, having its maxima at $\delta \approx \pm 0.75030751.$  Thus, a random matrix in $\Ntwo$ is more likely to have an eigenvalue of magnitude near $3/4$ than one of magnitude near $0$.  We expect this would still hold if we were to condition on matrices in $\Ntwo$ having rational eigenvalues, since any matrix with real eigenvalues is a small perturbation from one with rational eigenvalues.  That this is not true for $V(\delta)$ shows that the eigenvalue distribution of $\M_2(k)$ is not purely the result of magnitude considerations but also encodes some of the arithmetic structure of the integers up to $k$.

We remark that Theorem~\ref{theorem: particular integer eigenvalue} can also be obtained from a powerful result of Katznelson \cite{K}.  Let ${\mathcal B}$ be a convex body containing the origin in $\R^4$, and embed the set of $2\times 2$ integer matrices $\big( \genfrac{}{}{0pt}{1}{a}{b} \genfrac{}{}{0pt}{1}{c}{d} \big)$ as lattice points $(a,b,c,d) \in \Z^4$.  Then Theorem~1 of \cite{K} gives an asymptotic formula for the number of singular integer matrices inside the dilate $k \cdot {\mathcal B}$.  Taking ${\mathcal B} = [-1,1]^4$ then yields an asymptotic formula for $|M^0_2(k)|$, and more generally one can obtain $|M^\lambda_2(k)|$ by adding and subtracting appropriate shifts of ${\mathcal B}$.  The asymptotic formula in \cite{K} is defined in terms of an unusual singular measure supported on the Zariski-closed subset of $\R^4$ corresponding to singular matrices.  The explicit computation of this measure is roughly analogous to our case-by-case considerations in Section~\ref{section: key proof}, modulo the significant complications of carrying error terms.  Our techniques are more elementary, but Katznelson's results apply in theory to matrices of any size, whereas our methods become unwieldy even for $3 \times 3$ matrices.

In the case of $n\times n$ matrices with entries independently chosen from a Gaussian distribution, a great deal more is known.  Edelman \cite{E} has computed the exact distributions of the real and complex eigenvalues for any $n$, as well as the number of real eigenvalues (for instance, the probability of having all $n$ eigenvalues real is precisely $2^{-n(n-1)/4}$).  As $n \to \infty$, the real eigenvalues, suitably rescaled by a factor of $1/\sqrt{n}$, converge to the uniform distribution on $[-1,1]$.  Similarly, the complex eigenvalues converge to the ``circular law'' predicted by Girko \cite{Girko}, namely the uniform distribution on the unit disk centered at the origin.  Very recently, Tao and Vu \cite{TV} have shown that the circular law is universal: one can replace the Gaussian distribution by an arbitrary distribution with mean 0 and variance 1. Similar results have been established for random symmetric matrices with entries independently chosen from a Gaussian distribution (the ``Wigner law'') or from other distributions.

Those who are interested in the connections between analytic number theory and random matrix theory might wonder whether those connections are related to the present paper. The matrices in that context, however, are selected from classical matrix groups, such as the group of $n\times n$ Hermitian matrices, randomly according to the Haar measures on the groups. The relationship to our results is therefore minimal.

\section{Preliminaries about matrices}

We begin with some elementary observations about $2\times 2$ matrices that will simplify our computations.  The first lemma explains why the functions $V$ and $W$ are supported only on $[-2,2]$.

\begin{lemma}
Any eigenvalue of a matrix in $\M_2(k)$ is bounded in absolute value by $2k$. Any eigenvalue of a matrix in $\Ntwo$ is bounded in absolute value by~2.
\label{lemma: baby Gershgorin}
\end{lemma}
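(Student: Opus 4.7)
This is the $2\times 2$ specialization of Gershgorin's circle theorem, and I would prove it directly from the eigenvalue equation rather than quote a general result. Let $M = \bigl(\genfrac{}{}{0pt}{1}{a}{c}\,\genfrac{}{}{0pt}{1}{b}{d}\bigr)$ be any matrix under consideration, and let $\lambda\in\mathbb{C}$ be an eigenvalue with nonzero eigenvector $v = (x,y)^T$. One of $|x|$, $|y|$ is at least as large as the other; I would split into these two cases.

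If $|x|\ge|y|$, then $x\ne0$ and the first component of $Mv=\lambda v$ reads $(\lambda - a)x = by$, so
\[
|\lambda - a| \;=\; |b|\cdot\frac{|y|}{|x|} \;\le\; |b|,
\]
and hence $|\lambda|\le|a|+|b|$. The symmetric case $|y|\ge|x|$ uses the second row to give $|\lambda|\le|c|+|d|$. Either way, $|\lambda|$ is bounded by the largest row sum of $|M|$. Note that this argument is insensitive to whether $\lambda$ is real or complex, since we only ever used absolute values.

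Applying this bound to the two distributions finishes the proof: for $M\in\M_2(k)$ each entry has absolute value at most $k$, so both row sums are at most $2k$; for $M\in\Ntwo$ each entry lies in $[-1,1]$, so both row sums are at most $2$. There is no real obstacle here — the lemma is essentially a two-line calculation, and its purpose in the paper is simply to justify restricting the functions $V$ and $W$ to the interval $[-2,2]$.
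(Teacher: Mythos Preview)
Your argument is correct and is precisely the standard proof of Gershgorin's circle theorem specialized to $2\times2$ matrices; the paper takes the same route, simply citing Gershgorin's theorem rather than inlining its proof. The only difference is cosmetic: you unpack the eigenvector inequality directly, while the paper quotes the general $n\times n$ statement and then observes that entries bounded by $B$ force $|\lambda|\le nB$.
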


\begin{proof}
We invoke Gershgorin's ``circle theorem'' \cite{G}, a standard result in spectral theory: let $M = (m_{ij})$ be an $n\times n$ matrix, and let $D(z,r)$ denote the disk of radius $r$ around the complex number $z$. Then Gershgorin's theorem says that all of the eigenvalues of $M$ must lie in the union of the disks
\[
D\bigg( m_{11}, \sum_{\substack{1\le j\le n \\ j\ne1}} |m_{1j}| \bigg), \quad D\bigg( m_{22}, \sum_{\substack{1\le j\le n \\ j\ne2}} |m_{2j}| \bigg), \quad \dots, \quad D\bigg( m_{nn}, \sum_{\substack{1\le j\le n \\ j\ne n}} |m_{nj}| \bigg).
\]
In particular, if all of the entries of $M$ are bounded in absolute value by $B$, then all the eigenvalues are bounded in absolute value by $nB$.
\end{proof}

The key to the precise enumeration of $\M_2^\lambda(k)$ is the simple structure of singular integer matrices:

\begin{lemma} 
For any singular matrix $M\in \M_2(k)$, either at least two entries of $M$ equal zero, or else there exist nonzero integers $a,b,c,d$ with $(a,b)=1$ such that
\begin{equation}
M = \begin{pmatrix} ac & bc \\ ad & bd \end{pmatrix}.
\label{represent}
\end{equation}
Moreover, this representation of $M$ is unique up to replacing each of $a,b,c,d$ by its negative.
\label{baby.enumeration}
\end{lemma}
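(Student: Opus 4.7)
My plan is to work with the row-vector structure of $M$ and exploit the primitivity of a single nonzero row to parameterize the other.

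Write the rows of $M$ as $r_1 = (m_{11}, m_{12})$ and $r_2 = (m_{21}, m_{22})$. Since $\det M = 0$, these rows are linearly dependent. If either row is the zero vector, then $M$ already has at least two zero entries and we are done, so assume both rows are nonzero. For $r_1$ I set $c = \pm\gcd(m_{11}, m_{12})$ (with a chosen sign) so that $r_1 = c(a,b)$ with $\gcd(a,b) = 1$ and $c\ne0$. Linear dependence forces $r_2$ to be a rational multiple of $(a,b)$, but primitivity of $(a,b)$ together with $r_2\in\Z^2$ promotes that multiplier to an integer $d$, necessarily nonzero since $r_2\ne0$. This yields the desired form~\eqref{represent}.

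Next I verify that whenever $M$ has at most one zero entry, none of $a, b, c, d$ vanishes. We already have $c, d \neq 0$ by construction. If $a = 0$, then primitivity forces $b = \pm 1$, and then $m_{11} = ac = 0$ together with $m_{21} = ad = 0$ gives two zero entries in the first column; symmetrically $b = 0$ yields two zeros in the second column. Hence the dichotomy of the lemma is clean.

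For uniqueness, suppose $(a, b, c, d)$ and $(a', b', c', d')$ give two such representations of the same $M$. Equating first rows gives $c(a,b) = c'(a', b')$; taking the gcd of the two components and using $\gcd(a,b) = \gcd(a', b') = 1$ yields $|c| = |c'|$, whence $c' = \ep c$ and $(a', b') = \ep(a,b)$ for a common sign $\ep \in \{\pm 1\}$. Equating second rows then gives $d(a,b) = \ep d'(a,b)$, and since $(a,b) \neq 0$ we conclude $d' = \ep d$. Thus the two 4-tuples are related by a single simultaneous sign change on all four coordinates. There is no serious obstacle in this argument; the only care required is in tracking signs in the uniqueness step and in quarantining the degenerate vanishing subcases inside the two-zero-entry alternative.
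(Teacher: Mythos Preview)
Your proof is correct and follows essentially the same approach as the paper: factor out the gcd of the first row to obtain a primitive vector $(a,b)$, then use primitivity to force the second row's multiplier to be an integer. Your case analysis is organized slightly differently (treating zero rows first, then ruling out $a=0$ or $b=0$ afterward) and your uniqueness argument tracks the common sign $\ep$ more explicitly than the paper's, but these are cosmetic variations on the same argument.
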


\begin{proof}
If one of the entries of $M$ equals zero, then a second one must equal zero as well for the determinant to vanish. Otherwise, given
\[
M = \begin{pmatrix} m_{11} & m_{12} \\ m_{21} & m_{22} \end{pmatrix}
\]
with none of the $m_{ij}$ equal to zero, define $c=(m_{11},m_{12})$, and set $a=m_{11}/c$ and $b=m_{12}/c$, so that $(a,b)=1$. Since $M$ is singular, the second row of $m$ must be a multiple of the first row---that is, there exists a real number $d$ such that $m_{21}=ad$ and $m_{22}=bd$. Since $a$ and $b$ are relatively prime, moreover, $d$ must in fact be an integer.

This argument shows that every such matrix has one such representation. If
\[
M = \begin{pmatrix} a'c' & b'c' \\ a'd' & b'd' \end{pmatrix}
\]
is another such representation, then $(a',b')=1$ implies $(a'c',b'c')=|c'|$, which shows that $|c'|=c$; the equalities $|a'|=|a|$, $|b'|=|b|$, and $|d'|=|d|$ follow quickly.
\end{proof}

For a $2\times 2$ matrix $M=\big( \genfrac{}{}{0pt}{1}{a}{b} \genfrac{}{}{0pt}{1}{c}{d} \big),$ we define $\disc M = (\tr M)^2 - 4 \det M = (a-d)^2 + 4bc.$  It is easily seen that this is the discriminant of the characteristic polynomial of $M$.  We record the following elementary facts, which will be useful in the proof of Lemma~\ref {lemma: double eigenvalue} and Proposition~\ref {F_W proposition}.

\begin{lemma} Let $M$ be a $2 \times 2$ matrix with real entries.
\begin{enumerate}[(a)]
\item $M$ has repeated eigenvalues if and only if $\disc M = 0$.
\item $M$ has real eigenvalues if and only if $\disc M \ge 0$.
\item $\det M < 0$ if and only if $M$ has two real eigenvalues of opposite sign.
\item If $\det M > 0$ and $\disc M \ge 0$, then the eigenvalues of $M$ have the same sign as $\tr M$. 
\end{enumerate}
\label{discriminant}
\end{lemma}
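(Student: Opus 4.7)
The plan is to reduce everything to the characteristic polynomial $p(x) = x^2 - (\tr M) x + \det M$, whose roots are exactly the eigenvalues of $M$. The quadratic formula gives the eigenvalues explicitly as
\[
\lambda_{\pm} = \frac{\tr M \pm \sqrt{(\tr M)^2 - 4\det M}}{2} = \frac{\tr M \pm \sqrt{\disc M}}{2},
\]
so the discriminant of $p$ is literally $\disc M$.

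Parts (a) and (b) are then immediate: two real roots coincide precisely when the expression under the square root vanishes, which gives (a); and both roots are real precisely when that expression is nonnegative, which gives (b). For part (c), I would use the fact that the product of the eigenvalues equals the constant term $\det M$. If the eigenvalues were a complex-conjugate pair $\alpha \pm \beta i$ with $\beta \ne 0$, their product would be $\alpha^2 + \beta^2 > 0$; so $\det M < 0$ forces the eigenvalues to be real, and then their product being negative forces them to have opposite signs. Conversely, if the eigenvalues are real with opposite signs, their product $\det M$ is negative.

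For part (d), the hypothesis $\disc M \ge 0$ combined with part (b) tells us the eigenvalues are real, and $\det M > 0$ combined with the product formula $\lambda_+ \lambda_- = \det M$ forces them to have the same (nonzero) sign. Their sum is $\lambda_+ + \lambda_- = \tr M$, and since two nonzero reals of the same sign have a sum of that same sign, the common sign of the eigenvalues matches the sign of $\tr M$.

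None of these steps is a genuine obstacle; the whole lemma is a direct unpacking of the quadratic formula and Vieta's relations, and the only care needed is to rule out the complex-conjugate case in (c) before concluding that ``opposite sign'' is well-defined.
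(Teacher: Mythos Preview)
Your proof is correct and follows essentially the same route as the paper's: both arguments reduce to Vieta's relations $\tr M = \lambda_1 + \lambda_2$, $\det M = \lambda_1\lambda_2$, $\disc M = (\lambda_1-\lambda_2)^2$, and handle (c) by observing that complex-conjugate eigenvalues would give a positive product. The only cosmetic difference is that the paper states these identities directly rather than deriving them via the quadratic formula.
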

\begin{proof}
Let $\lambda_1, \lambda_2$ denote the eigenvalues of $M$, so that $\tr M = \lambda_1 + \lambda_2$, $\det M = \lambda_1 \lambda_2$, and $\disc M = (\lambda_1 - \lambda_2)^2$, each of which is real.  Parts (a), (b) and (d) follow immediately from these observations, and part (c) from the fact that $\lambda_2 = \overline{\lambda_1}$ if $\lambda_1, \lambda_2$ are complex.
\end{proof}

The next lemma gives a bound for the probability of a matrix having repeated eigenvalues.  It is natural to expect this probability to converge to 0 as $k$ increases, and indeed such a result was obtained in \cite{HLM} for matrices of arbitrary size.  We give a simple proof of a stronger bound for the $2\times 2$ case, as well as an analogous qualitative statement for real matrices which will be helpful in the proof of Theorem~\ref{theorem: distribution of real eigenvalues}.

\begin{lemma}
The number of matrices in $\M_2(k)$ with a repeated eigenvalue is $\ll_\ep k^{2+\ep}$ for every $\ep>0$. The probability that a random matrix in $\Ntwo$ has a repeated eigenvalue or is singular is~$0$.
\label{lemma: double eigenvalue}
\end{lemma}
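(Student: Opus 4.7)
The plan is to combine part (a) of Lemma~\ref{discriminant}, which characterizes repeated eigenvalues via $\disc M = 0$, with an elementary divisor-counting argument in the integer case and a dimension/measure argument in the real case.

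For the integer count, write $M = \big( \genfrac{}{}{0pt}{1}{a}{b} \genfrac{}{}{0pt}{1}{c}{d} \big)$ with $|a|,|b|,|c|,|d| \le k$, and observe that a repeated eigenvalue forces $(a-d)^2 + 4bc = 0$. In particular $a - d$ is even, so I write $a - d = 2m$ with $|m| \le k$ and obtain the equation $bc = -m^2$. When $m = 0$ the constraint $bc = 0$ together with $a = d$ gives at most $O(k^2)$ matrices by inspection. For each nonzero $m$, the pairs $(b,c)$ with $bc = -m^2$ are in bijection with divisors of $m^2$ (up to signs), so there are at most $2\tau(m^2) \ll_\ep m^\ep$ of them; and for each such $m$, the number of pairs $(a,d) \in [-k,k]^2$ with $a - d = 2m$ is at most $2k+1$. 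Summing
\[
\sum_{1 \le |m| \le k} (2k+1) \cdot 2\tau(m^2) \ll_\ep k \sum_{m=1}^{k} m^\ep \ll_\ep k^{2+\ep}
\]
yields the desired bound. The only mild subtlety is invoking the standard bound $\tau(n) \ll_\ep n^\ep$ for the divisor function; I would simply cite this.

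For the real case, the set of $M \in \R^4$ with repeated eigenvalues is the zero locus of the polynomial $P(a,b,c,d) = (a-d)^2 + 4bc$, and the singular matrices form the zero locus of $\det M = ad - bc$. Both polynomials are nonzero, so each zero locus is a proper algebraic subvariety of $\R^4$ and hence has four-dimensional Lebesgue measure zero. Since the probability measure on $\Ntwo$ is the product of four copies of the uniform measure on $[-1,1]$, it is absolutely continuous with respect to Lebesgue measure on $[-1,1]^4 \subset \R^4$, so these zero loci receive probability zero.

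There is no serious obstacle in this argument; the only place that requires a moment's thought is recognizing that the parity of $a-d$ is forced in the integer case (so the reduction to counting factorizations of $m^2$ is clean) and confirming that the $m = 0$ degenerate stratum contributes only $O(k^2)$, which is absorbed in the stated bound.
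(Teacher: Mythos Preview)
Your proof is correct and essentially identical to the paper's: both use the discriminant criterion from Lemma~\ref{discriminant}(a), separate the case $a=d$ from $a\neq d$, observe the parity constraint $a\equiv d\pmod 2$, and bound the remaining count by the divisor estimate $\tau(n)\ll_\ep n^\ep$, while the real case is dispatched by the same measure-zero argument. The only cosmetic difference is that you parameterize by $m=(a-d)/2$ rather than summing directly over pairs $(a,d)$.
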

\begin{proof}
By Lemma~\ref{discriminant}(a), the $2\times2$ matrix
$\big( \genfrac{}{}{0pt}{1}{a}{c} \genfrac{}{}{0pt}{1}{b}{d} \big)$
has a double eigenvalue if and only if $4bc = -(a-d)^2$. For matrices in $\Ntwo$ this is easily seen to be a zero-probability event, as is the event that $\det M = ad-bc=0$.

For matrices in $\M_2(k)$, we enumerate how many can satisfy $4bc = -(a-d)^2$. If $a=d$ then there are $4k+1$ choices for $b,c\in\{-k,\dots,k\}$; otherwise there are at most $2\tau((a-d)^2/4)$ choices if $a\equiv d\mod2$ and no choices otherwise.  (Here $\tau(n)$ is the number-of-divisors function; the factor of 2 comes from the fact that $b$ and $c$ can be positive or negative, while the ``at most'' is due to the fact that not all factorizations of $-(a-d)^2$ result in two factors not exceeding $k$.)  Therefore the number of matrices in $\M_2(k)$ with a repeated eigenvalue is at most
\begin{equation}
\sum_{|a|\le k} (4k+1) + 2 \sum_{|a|\le k} \sum_{\substack{|d|\le k  \\ d\neq a\\ a\equiv d\mod2}} \tau\bigg( \frac{(a-d)^2}4 \bigg) \ll_\ep k^{2+\ep},
\label{could use Perron}
\end{equation}
where the inequality follows from $(a-d)^2/4 \le k^2$ and the well-known fact that $\tau(n) \ll_\ep n^\ep$ for any $\ep>0$ (see for instance \cite[p.~56]{Magic}).
\end{proof}


\section{Enumeration theorems for integer eigenvalues}
\label{enumeration}
Let $\mu(n)$ be the M\"obius function, characterized by the identity
\begin{equation}
\sum_{d\mid n} \mu(d) = \begin{cases}1, &\text{if }n=1, \\ 0, &\text{if }n>1.\end{cases}
\label{mobius}
\end{equation}
The well-known Dirichlet series identity $1/{\zeta(s)} = \sum_{n=1}^\infty \mu(n) 
n^{-s}$ is valid for $\Re{s} > 1$ (see, for example, \cite[Corollary 1.10]{Magic}).  In particular, $\sum \mu(n)/n^2 = 6/\pi^2$, and we can estimate the tail of this series (using $|\mu(n)| \le 1$) to obtain the quantitative estimate
\begin{equation}
\sum_{d\le k} \frac{\mu(d)}{d^2} = \frac{6}{\pi^2} + O\!\left(\frac{1}{k}\right).
\label{mobius sum}
\end{equation}

\begin{lemma}
For nonzero integers $a$, $b$ and parameters $k$, $\lambda$, define the function
\begin{equation}
N_{k,\lambda}(a,b) = \#\{(c,d) \in \Z^2,\, c\ne0,\, d\ne0: |ac+\lambda|,  |bc|,|ad|, |bd+\lambda| \le k \}.
\label{definition of N lambda}
\end{equation}
Then
\[
\big| \M^\lambda_2(k) \big| = 4 \sum_{d\le k} \mu(d) \sum_{1\le \alpha< \beta \le k/d} N_{k,\lambda}(d\alpha,d\beta) + O(k^2),
\]
where the implied constant is independent of $\lambda$ and $k$.
\label{initial count and mu insertion}
\end{lemma}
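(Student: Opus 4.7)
The approach is to apply Lemma~\ref{baby.enumeration} to $M - \lambda I$, since $\lambda$ is an eigenvalue of $M$ exactly when $M - \lambda I$ is singular. This partitions $\M_2^\lambda(k)$ into the ``degenerate'' subset, in which $M - \lambda I$ has at least two zero entries, and the nondegenerate subset, in which $M - \lambda I$ has all four entries nonzero and therefore admits the representation $M - \lambda I = \left(\begin{smallmatrix} ac & bc \\ ad & bd \end{smallmatrix}\right)$ with $(a,b) = 1$ and $abcd \ne 0$. The degenerate part contributes $O(k^2)$ matrices: specifying which two of the four entries of $M - \lambda I$ vanish (boundedly many possibilities) leaves at most two free entries in $M$, each with at most $2k+1$ values. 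On the nondegenerate side, the requirement $M \in \M_2(k)$ becomes exactly $|ac+\lambda|, |bc|, |ad|, |bd+\lambda| \le k$, matching the conditions defining $N_{k,\lambda}(a,b)$.

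Next I would convert the nondegenerate count into a M\"obius sum. Of the sign flips of $(a,b,c,d)$, the only one that preserves each of the four products $ac, bc, ad, bd$ is the simultaneous flip $(a,b,c,d) \mapsto (-a,-b,-c,-d)$, so each nondegenerate matrix corresponds to exactly two admissible 4-tuples. Hence
\[
2\,|\M_2^\lambda(k)| = \#\{(a,b,c,d) \in \Z^4 : abcd \ne 0,\ (a,b) = 1,\ |ac+\lambda|, |bc|, |ad|, |bd+\lambda| \le k\} + O(k^2).
\]
To remove the coprimality constraint I would apply the standard identity $\mathbf{1}\{(a,b) = 1\} = \sum_{e \mid \gcd(|a|,|b|)} \mu(e)$ and then write $a = e\alpha'$, $b = e\beta'$ with $\alpha', \beta' \in \Z \setminus \{0\}$, giving
\[
\#\{\ldots\} = \sum_{e \ge 1} \mu(e) \sum_{\alpha', \beta' \ne 0} N_{k,\lambda}(e\alpha', e\beta').
\]

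Finally I would symmetrize the inner sum using three easy symmetries of $N_{k,\lambda}$: the substitutions $c \mapsto -c$ and $d \mapsto -d$ in \eqref{definition of N lambda} give $N_{k,\lambda}(-a, b) = N_{k,\lambda}(a, -b) = N_{k,\lambda}(a, b)$, and swapping $c \leftrightarrow d$ gives $N_{k,\lambda}(b, a) = N_{k,\lambda}(a, b)$. Together these collapse the four sign sectors (factor of $4$) and halve the off-diagonal contribution (factor of $2$), yielding
\[
\sum_{\alpha', \beta' \ne 0} N_{k,\lambda}(e\alpha', e\beta') = 8 \sum_{1 \le \alpha < \beta} N_{k,\lambda}(e\alpha, e\beta) + 4 \sum_{\alpha \ge 1} N_{k,\lambda}(e\alpha, e\alpha).
\]
The diagonal $\alpha = \beta$ piece lands in the error: the constraint $|e\alpha\, c| \le k$ with $c \ne 0$ forces $|c| \le k/(e\alpha)$ (and similarly for $d$), so $N_{k,\lambda}(e\alpha, e\alpha) \le 4k^2/(e\alpha)^2$, and summing gives $\sum_{e,\alpha} N_{k,\lambda}(e\alpha, e\alpha) \ll k^2 \sum_e e^{-2} \sum_\alpha \alpha^{-2} \ll k^2$. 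Dividing by $2$ produces the asserted factor of $4$, and the upper-range restrictions $e \le k$ and $\beta \le k/e$ stated in the lemma are automatic because $N_{k,\lambda}(e\alpha, e\beta)$ vanishes whenever $e\beta > k$ (the constraint $|bc| \le k$ forces $e\beta \le k$ since $c$ is a nonzero integer).

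The main obstacle, I expect, is not analytic but combinatorial: tracking the sign and swap symmetries carefully enough to obtain the precise factor of $4$, and cleanly isolating the $\alpha = \beta$ diagonal as an $O(k^2)$ error rather than as part of the main term. No delicate analytic input beyond the crude bound $N_{k,\lambda}(a,b) \ll k^2/\max(|a|,|b|)^2$ and the convergence of $\sum n^{-2}$ should be required.
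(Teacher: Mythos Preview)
Your proposal is correct and follows essentially the same route as the paper: apply Lemma~\ref{baby.enumeration} to $M-\lambda I$, absorb the degenerate (two-zero-entry) matrices into $O(k^2)$, halve the nondegenerate count by the two-fold ambiguity of the representation, then exploit the sign and swap symmetries of $N_{k,\lambda}$ together with M\"obius inversion to reach the stated sum. The only cosmetic difference is the order of operations: the paper reduces by symmetry \emph{before} inserting $\mu$, so the diagonal (constrained by $(a,b)=1$) is the single term $N_{k,\lambda}(1,1)\ll k^2$, whereas you insert $\mu$ first and then bound the full diagonal $\sum_{e,\alpha} N_{k,\lambda}(e\alpha,e\alpha)\ll k^2$ via $\sum n^{-2}<\infty$; both are equally valid.
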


\begin{proof}
Fix an integer $0 \le \lambda \le 2k$, and let $M \in \M^\lambda_2(k)$, so that $M-\lambda I$ is singular. By Lemma~\ref{baby.enumeration}, either at least two entries of $M-\lambda I$ equal zero, or else $M-\lambda I$ has exactly two representations of the form \eqref{represent}. In the former case, there are $2k+1$ choices for each of the two potentially nonzero entries, hence $O(k^2)$ such matrices in total (even taking into account the several different choices of which two entries are nonzero). In the latter case, there are exactly two corresponding quadruples $a,b,c,d$ of integers as in Lemma~\ref{baby.enumeration}. Taking into account that each entry of $M$ must be at most $k$ in absolute value, we deduce that
\begin{align*}
\big| \M^\lambda_2(k) \big| &=  \tfrac12\, \#\big\{(a,b,c,d) \in \Z^4: a,b,c,d\ne0, \notag \\
&\qquad (a,b)=1,\, |ac+\lambda|,  |bc|,|ad|, |bd+\lambda| \le k\big\} + O(k^2) \notag \\
&= \tfrac12 \sum_{\substack{1\le |a|,|b| \le k \\ (a,b)=1}} N_{k,\lambda}(a,b) + O(k^2),
\label{shifted.count}
\end{align*}
where $N_{k,\lambda}(a,b)$ is defined as above.

Because of the symmetries $N_{k,\lambda}(a,b)=N_{k,\lambda}(-a,b)=N_{k,\lambda}(a,-b)=N_{k,\lambda}(-a,-b)$, we have
\[
\big| \M^\lambda_2(k) \big| = 2 \sum_{\substack{1\le a,b \le k \\ (a,b)=1}} N_{k,\lambda}(a,b) + O(k^2).
\]
The only term in the sum where $a=b$ is the term $a=b=1$, and for all other terms we can invoke the additional symmetry $N_{k,\lambda}(a,b)=N_{k,\lambda}(b,a)$, valid by switching the roles of $c$ and $d$ in the definition~\eqref{definition of N lambda} of $N_{k,\lambda}(a,b)$. We obtain
\begin{align*}
\big| \M^\lambda_2(k) \big| &=  4 \sum_{\substack{1\le a<b \le k \\ (a,b)=1}} N_{k,\lambda}(a,b) + 2N_{k,\lambda}(1,1) + O(k^2) \\
&=  4 \sum_{\substack{1\le a < b \le k \\ (a,b)=1}} N_{k,\lambda}(a,b) + O(k^2),
\end{align*}
where the last step used the fact that $N_{k,\lambda}(1,1) \le \#\{(c,d)\in\Z^2 : |c|,|d|\le k\} \ll k^2$.

Using the characteristic property of the M\"obius function \eqref{mobius}, we can write the last expression as
\begin{align*}
\big| \M^\lambda_2(k) \big| &= 4 \sum_{1\le a< b \le k} N_{k,\lambda}(a,b) \sum_{d\mid (a,b)} \mu(d) + O(k^2) \\
 &= 4 \sum_{d\le k} \mu(d) \sum_{\substack{1\le a< b \le k \\ d\mid a,\, d\mid b}} N_{k,\lambda}(a,b) + O(k^2) \\
&= 4 \sum_{d\le k} \mu(d) \sum_{1\le \alpha< \beta \le k/d} N_{k,\lambda}(d\alpha,d\beta) + O(k^2),
\end{align*}
as claimed.
\end{proof}

\begin{lemma}
Let $k$ and $\lambda$ be integers with $0\le \lambda\le 2k$, and let $x$ and $y$ be integers with $1\le x\le y\le k$. Then
\[
N_{k,\lambda}(x,y) = k^2 C\big( \tfrac \lambda k; x,y \big) D\big( \tfrac \lambda k; x,y \big) + O\big(\tfrac{k}{y}\big),
\]
where
\begin{align}
C(\delta;x,y) &= \max \big\{ 0, \min \big\{ \tfrac{1-\delta}x + \tfrac1y , \tfrac2y \big\} \big\}, \notag \\
D(\delta;x,y) &= \min\big\{ \tfrac{1-\delta}y + \tfrac1x, \tfrac2y \big\}.
\label{CD definition}
\end{align}
\label{converting from N to CD lemma}
\end{lemma}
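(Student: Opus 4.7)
The four inequalities defining $N_{k,\lambda}(x,y)$ decouple into two constraints on $c$ alone (from the $|xc+\lambda|$ and $|yc|$ bounds) and two on $d$ alone. Writing
\[
I_c := \bigl[-(k+\lambda)/x,\,(k-\lambda)/x\bigr]\cap\bigl[-k/y,\,k/y\bigr], \quad I_d := \bigl[-k/x,\,k/x\bigr]\cap\bigl[-(k+\lambda)/y,\,(k-\lambda)/y\bigr],
\]
the count factors as $N_{k,\lambda}(x,y) = \#\bigl((I_c\cap\Z)\setminus\{0\}\bigr) \cdot \#\bigl((I_d\cap\Z)\setminus\{0\}\bigr)$. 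The elementary estimate ``lattice points in an interval equals length plus $O(1)$'', with the $O(1)$ also absorbing the exclusion of $0$, yields
\[
N_{k,\lambda}(x,y) = |I_c|\cdot|I_d| + O\bigl(|I_c| + |I_d| + 1\bigr).
\]
Both $I_c$ and $I_d$ lie inside intervals of length $2k/y$, so $|I_c|,|I_d| \le 2k/y$; combined with $y \le k$ this gives the error $O(k/y)$ claimed in the lemma.

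It remains to identify $|I_c|$ with $k\,C(\lambda/k;x,y)$ and $|I_d|$ with $k\,D(\lambda/k;x,y)$. By the symmetry $(c,d)\mapsto(-c,-d)$ we may assume $\lambda \ge 0$, and we set $\delta = \lambda/k \in [0,2]$. For $I_c$, the hypothesis $x \le y$ forces $(1+\delta)/x \ge 1/y$, so the lower endpoint of $I_c$ is $-k/y$, while the upper endpoint is $\min\{(k-\lambda)/x,\,k/y\}$. When $I_c$ is nonempty this gives $|I_c|/k = \min\{(1-\delta)/x + 1/y,\,2/y\}$; otherwise (when the upper endpoint falls below $-k/y$) $|I_c|=0$, accounting for the $\max\{0,\cdot\}$ in the definition of $C$. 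A parallel computation for $I_d$ produces $|I_d|/k = \min\{(1-\delta)/y + 1/x,\,2/y\}$. Here the hypotheses $x\le y$ and $\delta \le 2$ imply
\[
\frac{1-\delta}{y} + \frac{1}{x} = \frac{x(1-\delta)+y}{xy} \ge \frac{x(1-\delta)+x}{xy} = \frac{2-\delta}{y} \ge 0,
\]
so $I_d$ is always nonempty and no truncation is needed. The main subtlety is precisely this asymmetry: $I_c$ can degenerate when $\delta > 1$ and $y > x/(\delta-1)$, whereas $I_d$ never does, which is why the $\max\{0,\cdot\}$ appears in $C$ but not in $D$.
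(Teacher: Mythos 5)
Your proposal is correct and follows essentially the same route as the paper: factor $N_{k,\lambda}(x,y)$ into two independent one-dimensional lattice-point counts, use ``length plus $O(1)$'', simplify the interval endpoints via $\lambda\ge0$ and $x\le y$ to identify the lengths with $kC$ and $kD$ (including the observation that only the $c$-interval can be empty, explaining the $\max\{0,\cdot\}$ in $C$), and absorb the cross terms into $O(k/y)$ using $C,D\ll 1/y$ and $y\le k$. No gaps.
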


\begin{proof}
We have
\begin{align*}
N_{k,\lambda}(x,y) &= \#\{(c,d) \in \Z^2,\, c\ne0,\, d\ne0: |xc+\lambda|,  |yc|,|xd|, |yd+\lambda| \le k \} \\
&= \#\{c \in \Z,\, c\ne0: -k \le xc+\lambda\le k,\, -k \le yc\le k \} \\
&\qquad{}\times \#\{d \in \Z,\, d\ne0: -k \le xd\le k,\, -k \le yd+\lambda\le k \}.
\end{align*}
Since $x$ and $y$ are positive, we can rewrite this product as
\begin{align}
N_{k,\lambda}(x,y) &= \#\{c \in \Z,\, c\ne0: (-k-\lambda)/x \le c\le (k-\lambda)/x,\, -k/y \le c\le k/y \} \notag \\
&\qquad {}\times \#\{d \in \Z,\, d\ne0: -k/x \le d\le k/x,\, (-k-\lambda)/y \le d\le (k-\lambda)/y \} \notag \\
&= \#\big\{c \in \Z,\, c\ne0: -k/y \le c \le \min\{ (k-\lambda)/x, k/y \} \big\} \label{eight inequalities} \\
&\qquad {}\times \#\big\{d \in \Z,\, d\ne0: \max\{ -k/x, (-k-\lambda)/y \} \le d\le (k-\lambda)/y \big\}, \notag 
\end{align}
where we have used $\lambda\ge0$ and $x\le y$ to slightly simplify the inequalities. The first factor on the right-hand side of equation~\eqref{eight inequalities} is
\[
\min\{ (k-\lambda)/x, k/y \} - (-k/y) + O(1) = k \min \big\{ (1-\tfrac\lambda k)/x + 1/y, 2/y \big\} + O(1)
\]
if this expression is positive, and 0 otherwise; it is thus precisely $k C(\tfrac\lambda k;x,y) + O(1)$. Similarly, the second factor on the right-hand side of equation~\eqref{eight inequalities} is
\[
(k-\lambda)/y - \max\{ -k/x, (-k-\lambda)/y \} + O(1) = k \min \{ (1-\tfrac\lambda k)/y + 1/x, 2/y \big\} + O(1)
\]
(note that this expression is always positive under the hypotheses of the lemma), which is simply $k D(\tfrac\lambda k;x,y) + O(1)$.  Multiplying these two factors yields
\[
N_{k,\lambda}(x,y) =  k^2 C\big( \tfrac \lambda k; x,y \big) D\big( \tfrac \lambda k; x,y \big) + k \cdot O( C\big( \tfrac \lambda k; x,y \big ) + D\big( \tfrac \lambda k; x,y \big)) + O(1).
\]
The lemma follows upon noting that both $ C( \tfrac \lambda k; x,y )$ and  $D( \tfrac \lambda k; x,y )$ are $\ll 1/y$ by definition, so that the second summand becomes simply $O(\tfrac k y)$, and the $O(1)$ term may be subsumed into $O(\tfrac k y)$ since $y \le k$.
\end{proof}

We have already used the trivial estimate
\[
\sum_{L\le\alpha< U} 1 = (U-L) + O(1),
\]
provided $0 < L < U$.  We will also use, without further comment, the estimates
\begin{equation*}
\sum_{L\le\alpha< U} \tfrac1\alpha = \log \tfrac UL + O\big( \tfrac1L \big);
\quad\text{in particular,}\quad
\sum_{1\le\alpha< U} \tfrac1\alpha = \log U + O(1);
\label{reciprocal sum}
\end{equation*}
and
\begin{equation*}
\sum_{L\le\alpha< U} \tfrac1{\alpha^2} = \tfrac1L - \tfrac1U + O\big( \tfrac1{L^2} \big);
\quad\text{in particular,}\quad
\sum_{L\le\alpha< U} \tfrac1{\alpha^2} = O\big( \tfrac1L \big).
\label{square reciprocal sum}
\end{equation*}
These estimates (also valid for $0 < L < U$) follow readily from comparison to the integrals $\int_L^U 1/x \,dx$ and
$\int_L^U 1/x^2 \, dx$.

Most of the technical work in proving Theorem~\ref{theorem: particular integer eigenvalue} lies in establishing an estimate on a sum of the form $\sum_{1\le\alpha < \beta} C(\delta;\alpha,\beta) D(\delta;\alpha,\beta)$ for a fixed $\beta$. The following proposition provides an asymptotic formula for this sum; we defer the proof until the next section. Assuming this proposition, though, we can complete the proof of Theorem~\ref{theorem: particular integer eigenvalue}, as well as Corollary~\ref{corollary: all integer eigenvalues}.

\begin{proposition}
Let $\beta\ge1$ and $0\le\delta\le 2$ be real numbers, and let $C$ and $D$ be the functions defined in equation~\eqref{CD definition}. Then
\[
\sum_{1\le\alpha < \beta} C(\delta;\alpha,\beta) D(\delta;\alpha,\beta) = \frac{V(\delta)}\beta + O\bigg( \frac{1+\log\beta}{\beta^2} \bigg),
\]
where $V(\delta)$ was defined in equation~\eqref{definition of V(delta)}.
\label{key proposition}
\end{proposition}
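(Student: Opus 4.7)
My plan is to reduce the sum to elementary monomial sums by piecewise-resolving the minima in $C$ and $D$, then to evaluate each monomial sum using the standard estimates for $\sum 1$, $\sum 1/\alpha$, and $\sum 1/\alpha^2$ stated just before the proposition.

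The transition points of the summand occur at $\alpha = |1-\delta|\beta$ (where $(1-\delta)/\alpha + 1/\beta$ equals either $2/\beta$ or $0$, depending on the sign of $1-\delta$) and at $\alpha = \beta/(1+\delta)$ (where $(1-\delta)/\beta + 1/\alpha$ equals $2/\beta$). The relative ordering of these thresholds, combined with the positivity constraint on $C$, partitions the analysis into exactly the four regimes listed in~\eqref{definition of V(delta)}. For $0 \le \delta < 1$ the interval $[1,\beta)$ splits into three subregions: $\alpha \le (1-\delta)\beta$ (where $C=D=2/\beta$); $(1-\delta)\beta < \alpha \le \beta/(1+\delta)$ (where $C = (1-\delta)/\alpha + 1/\beta$ and $D = 2/\beta$); and $\beta/(1+\delta) < \alpha < \beta$ (where both $C$ and $D$ take the form $(1-\delta)/\cdot + 1/\cdot$). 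For $\delta = 1$ the middle region collapses, leaving the summand $2/\beta^2$ on $[1,\beta/2]$ and $1/(\alpha\beta)$ on $(\beta/2,\beta)$. For $1 < \delta \le \sqrt{2}$ the summand vanishes on $[1,(\delta-1)\beta)$ and otherwise the three-region structure applies with $(\delta-1)\beta$ replacing $(1-\delta)\beta$. For $\sqrt{2} < \delta \le 2$ the inequality $(\delta-1)\beta > \beta/(1+\delta)$ forces only the outermost region $[(\delta-1)\beta,\beta)$ to contribute.

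Within each subregion I would expand $CD$ as a linear combination of $1/\alpha^2$, $1/\alpha$, and $1$ whose coefficients are explicit polynomials in $\delta$ divided by appropriate powers of $\beta$, and then apply the elementary sum estimates term by term. The outputs combine to a main term of order $1/\beta$ and an error of order $1/\beta^2$. For instance, in the regime $\sqrt{2} < \delta \le 2$ a direct calculation gives
\[
\sum_{(\delta-1)\beta \le \alpha < \beta} C(\delta;\alpha,\beta) D(\delta;\alpha,\beta) = \frac{\delta^2 - 2\delta - (\delta^2 - 2\delta + 2)\log(\delta-1)}{\beta} + O\bigg(\frac{1}{\beta^2}\bigg),
\]
which matches $V(\delta)/\beta$ exactly. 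The other three regimes require adding the contributions of two or three pieces; the non-logarithmic parts then collapse by factorizations such as $2+\delta-\delta^2 = (2-\delta)(1+\delta)$, and the logarithmic parts separate cleanly via $\log(1-\delta^2) = \log(1-\delta) + \log(1+\delta)$.

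The error bookkeeping needs care because the $O(1/L)$ and $O(1/L^2)$ corrections in the standard estimates appear to blow up as a lower endpoint $L$ shrinks. The most delicate case is $L = |1-\delta|\beta \to 0$ as $\delta \to 1$, but the associated prefactor $1-\delta$ exactly cancels the $1/L$ and leaves a clean $O(1/\beta^2)$ contribution. The only source of a logarithmic loss is the boundary situation where a subregion starts below $\alpha = 1$ and is truncated to $[1,U]$, which introduces a $(\log\beta)/\beta^2$ error and accounts for the $1+\log\beta$ factor in the stated bound. The main obstacle is thus organizational rather than conceptual: one must carry out four parallel case-calculations, handle the algebraic simplifications without sign errors in the logarithms, and verify by continuity that the resulting four formulas agree at the regime boundaries $\delta \in \{0, 1, \sqrt{2}, 2\}$.
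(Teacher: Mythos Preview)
Your proposal is correct and follows essentially the same route as the paper: the same four $\delta$-regimes, the same thresholds $\alpha=|1-\delta|\beta$ and $\alpha=\beta/(1+\delta)$, the same expansion of $CD$ into monomials in $1/\alpha$, and the same identification of the $(\log\beta)/\beta^2$ loss from the truncation when $|1-\delta|\beta<1$. The paper isolates that boundary analysis into a separate lemma (your inline remark that the prefactor $1-\delta$ absorbs the $1/L$ blow-up, together with the bound $|1-\delta|\log|1-\delta|^{-1}\ll(\log\beta)/\beta$ when $|1-\delta|<1/\beta$), but otherwise the structure and the computations are the same.
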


\begin{proof}[Proof of Theorem~\ref{theorem: particular integer eigenvalue} assuming Proposition~\ref{key proposition}]
The functions $C$ and $D$ defined in equation~\eqref{CD definition} are homogeneous of degree $-1$ in the variables $x$ and $y$, so that Lemma~\ref {converting from N to CD lemma} implies
\[
N_{k,\lambda}(d\alpha,d\beta) = \frac{k^2}{d^2} C\big( \tfrac \lambda k; \alpha,\beta \big) D\big( \tfrac \lambda k; \alpha,\beta \big) + O\big(\tfrac{k}{d\beta}).
\]
Inserting this formula into the conclusion of Lemma~\ref{initial count and mu insertion} yields
\[
\big| \M^\lambda_2(k) \big| = 4k^2 \sum_{d\le k} \frac{\mu(d)}{d^2} \sum_{1\le \alpha< \beta \le k/d} C\big( \tfrac \lambda k; \alpha,\beta \big) D\big( \tfrac \lambda k; \alpha,\beta \big) + O\bigg(\sum_{d\le k} \sum_{1 \le \alpha < \beta \le k/d} \frac{k}{d\beta}\bigg) + O(k^2).
\]
We bound the first error term by summing over $1 \le \alpha < \beta$ to obtain
\[
\sum_{d\le k} \sum_{1 \le \alpha < \beta \le k/d} \frac{k}{d\beta} \le  \sum_{d\le k} \sum_{1 < \beta \le k/d} \frac{k}{d} < \sum_{d\le k} \frac{k^2}{d^2} \ll k^2,
\]
so that we have the estimate
\begin{equation}
\big| \M^\lambda_2(k) \big| = 4k^2 \sum_{d\le k} \frac{\mu(d)}{d^2} \sum_{1\le \alpha< \beta \le k/d} C\big( \tfrac \lambda k; \alpha,\beta \big) D\big( \tfrac \lambda k; \alpha,\beta \big) + O(k^2).
\label{on hold for key proposition}
\end{equation}
We now apply Proposition~\ref{key proposition} to obtain
\begin{align*}
\big| \M^\lambda_2(k) \big| &= 4k^2 \sum_{d\le k} \frac{\mu(d)}{d^2} \sum_{1\le \beta \le k/d} \bigg( \frac{V(\lambda/k)}\beta + O\bigg( \frac{1+\log\beta}{\beta^2} \bigg) \bigg) + O(k^2) \\
&= 4k^2 \sum_{d\le k} \frac{\mu(d)}{d^2} \bigg( V(\lambda/k) \bigg( \log \frac k d + O(1) \bigg) + O(1) \bigg) + O(k^2) \\
&= 4k^2 \bigg( V(\lambda/k)\log k \sum_{d\le k} \frac{\mu(d)}{d^2} + O\bigg( \sum_{d\le k} \frac{\log d}{d^2} \bigg) \bigg) + O(k^2) \\
&= 4k^2 \bigg( V(\lambda/k)\log k \bigg( \frac6{\pi^2} + O\bigg( \frac1k \bigg) \bigg) + O(1) \bigg) + O(k^2) \\
&= \frac{24V(\lambda/k)}{\pi^2}k^2\log k + O(k^2),
\end{align*}
where we have used equation~\eqref{mobius sum} and the fact that $\sum 1/n^2$ and $\sum(\log n)/n^2$ are convergent (so the partial sums are uniformly bounded).
\end{proof}

\begin{proof}[Proof of Corollary~\ref{corollary: all integer eigenvalues} from Theorem~\ref{theorem: particular integer eigenvalue}]
Note that for any $M\in \M_2(k)$, if one eigenvalue is an integer then they both are (since the trace of $M$ is an integer). Thus if we add up the cardinalities of all of the $\M_2^\lambda(k)$, we get twice the cardinality of $\M_2^\Z(k)$, except that matrices with repeated eigenvalues only get counted once. However, the number of such matrices is $\ll_\ep k^{2+\ep}$ by Lemma~\ref {lemma: double eigenvalue}. Therefore
\begin{align*}
2|\M_2^\Z(k)| &= \sum_{\lambda\in\Z} |\M_2^\lambda(k)| + O_\ep(k^{2+\ep}) \\
&= \sum_{-2k\le\lambda\le2k} \bigg( \frac{24V(\lambda/k)}{\pi^2} k^2\log k + O(k^2) \bigg) + O_\ep(k^{2+\ep}) \\
&= \frac{24k^3\log k}{\pi^2} \sum_ {-2k\le\lambda\le2k} \frac{V(\lambda/k)}k + O(k^3).
\end{align*}
The sum is a Riemann sum of a function of bounded variation, so this becomes
\[
2|\M_2^\Z(k)| = \frac{24k^3\log k}{\pi^2} \bigg( \int_{-2}^2 V(\delta)\,d\delta + O\big(\tfrac1k\big) \bigg) + O(k^3).
\]
The corollary then follows from the straightforward computation of the integral $\int_{-2}^2 V(\delta)\,d\delta = \tfrac{4}{9}(7\sqrt{2} + 4 + 3\log(\sqrt{2}+1))$, noting that $\log(\sqrt{2}-1) = -\log(\sqrt{2}+1)$.
\end{proof}

\section{Proving the key proposition}
\label{section: key proof}
It remains to prove Proposition~\ref{key proposition}.  Recalling that the functions $C$ and $D$ defined in equation~\eqref{CD definition} are formed by combinations of minima and maxima, we need to separate our arguments into several cases depending on the range of $\delta$.  The following lemma addresses a sum that occurs in two of these cases ($0 < \delta < 1$ and $1 < \delta < \sqrt{2}$).  Note that because of the presence of terms like $\log(\delta-1)$ in the formula for $V(\delta)$, we need to exercise some caution near $\delta=1$.

\begin{lemma}
Let $\beta\ge1$ and $0\le\delta \le \sqrt{2}$ be real numbers, with $\delta\ne1$. Then
\begin{multline*}
\sum_{\max\{1,|1-\delta|\beta\}\le\alpha<(1+\delta)^{-1}\beta} \bigg( \frac{1-\delta}\alpha + \frac1\beta \bigg) \frac2\beta \\
{}= \frac2\beta \bigg( \frac1{1+\delta} - |1-\delta| - (1-\delta)\log|1-\delta^2| \bigg) + O\bigg( \frac{1+\log\beta}{\beta^2} \bigg).
\end{multline*}
\label{lemma where lower summation limit could be 1}
\end{lemma}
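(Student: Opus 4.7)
The plan is to split the summand using linearity, apply the two standard partial-sum estimates recalled just before the lemma, and then handle the $\max$ in the lower limit by dividing into two regimes according to whether $|1-\delta|\beta \ge 1$ or not. Write $L = \max\{1,|1-\delta|\beta\}$ and $U = (1+\delta)^{-1}\beta$, and note that the hypotheses $0 \le \delta \le \sqrt 2$, $\delta \ne 1$ guarantee $L < U$ (since $|1-\delta|(1+\delta) = |1-\delta^2| < 1$ for $\delta \in [0,\sqrt 2]$). Pulling the $2/\beta$ out in front, the left-hand side equals
\[
\frac{2(1-\delta)}{\beta}\sum_{L\le \alpha < U}\frac{1}{\alpha} + \frac{2}{\beta^2}\sum_{L\le \alpha < U} 1
= \frac{2(1-\delta)}{\beta}\log\frac{U}{L} + \frac{2(U-L)}{\beta^{2}} + O\!\left(\frac{|1-\delta|}{\beta L}\right) + O\!\left(\frac{1}{\beta^{2}}\right).
\]

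Next I would show that the leading term $2(U-L)/\beta^{2}$ matches $\frac{2}{\beta}\bigl(\frac{1}{1+\delta}-|1-\delta|\bigr)$ up to $O(1/\beta^{2})$. Indeed $2U/\beta^{2} = 2/(\beta(1+\delta))$ exactly, while $2L/\beta^{2}$ equals $2|1-\delta|/\beta$ when $L = |1-\delta|\beta$ and equals $2/\beta^{2}$ otherwise; in the latter case one has $|1-\delta|\beta < 1$, so $2|1-\delta|/\beta < 2/\beta^{2}$ and the two formulas agree up to $O(1/\beta^{2})$.

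For the logarithmic term I consider the two regimes. When $|1-\delta|\beta \ge 1$, we have $L = |1-\delta|\beta$, so $U/L = 1/((1+\delta)|1-\delta|) = 1/|1-\delta^{2}|$, and thus $\frac{2(1-\delta)}{\beta}\log(U/L) = -\frac{2(1-\delta)\log|1-\delta^2|}{\beta}$, which is exactly the target main term; the error $O(|1-\delta|/(\beta L)) = O(1/\beta^{2})$ is harmless. When $|1-\delta|\beta < 1$, we have $L = 1$ and $\log(U/L) = \log\beta - \log(1+\delta)$, so the discrepancy between $\frac{2(1-\delta)}{\beta}\log(U/L)$ and the target $-\frac{2(1-\delta)}{\beta}\log|1-\delta^{2}|$ equals $\frac{2(1-\delta)}{\beta}\log(\beta|1-\delta|)$. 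Writing $t = \beta|1-\delta| \in (0,1)$, this discrepancy is $\frac{2}{\beta^{2}}\,t\log t$, and since $|t\log t| \le 1/e$ on $(0,1]$ this is $O(1/\beta^{2})$; likewise both the target contribution $\frac{2(1-\delta)\log|1-\delta^{2}|}{\beta}$ and the stray term $\frac{2|1-\delta|}{\beta}$ are themselves $O((1+\log\beta)/\beta^{2})$ in this regime, as $|1-\delta|\cdot\log|1-\delta|$ is bounded for $|1-\delta|<1/\beta$.

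There is essentially no obstacle, but the one point requiring real care is precisely this behaviour near $\delta = 1$, where the identity $U/L = 1/|1-\delta^{2}|$ breaks down and $\log|1-\delta^{2}|$ becomes unbounded. The mild error term $(1+\log\beta)/\beta^{2}$ in the statement is the slack needed to absorb the transition; I expect the only risk of a mistake is in bookkeeping signs of $(1-\delta)$ versus $|1-\delta|$ and in verifying uniformity of the $O$-constants across the cases, which the estimate $|t\log t| \le 1/e$ resolves cleanly.
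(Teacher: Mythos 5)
Your proposal is correct and follows essentially the same route as the paper: split into the regimes $|1-\delta|\beta\ge1$ and $|1-\delta|\beta<1$, apply the standard estimates for $\sum 1/\alpha$ and $\sum 1$, and absorb the near-$\delta=1$ degeneracy by the boundedness of $t\log t$ near $0$ (your single bound $|t\log t|\le 1/e$ is a slightly tidier packaging of the paper's bound on $|1-\delta|\log|1-\delta|^{-1}$). One small inaccuracy: $L<U$ is not guaranteed (it fails at $\delta=0$, at $\delta=\sqrt2$, and whenever $\beta<1+\delta$, since $|1-\delta^2|\le 1$ rather than $<1$), but in those degenerate cases the sum is empty and the claimed main term is itself $O\big((1+\log\beta)/\beta^2\big)$, so the conclusion still holds.
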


\begin{proof}
Suppose first that $|1-\delta|\beta\ge1$. Then the sum in question is
\begin{align*}
\frac{2(1-\delta)}\beta \sum_{|1-\delta|\beta\le\alpha<(1+\delta)^{-1}\beta} & \frac1\alpha + \frac2{\beta^2} \sum_{|1-\delta|\beta\le\alpha<(1+\delta)^{-1}\beta} 1 \\
&{}= \frac{2(1-\delta)}\beta \bigg( \log\frac{\beta/(1+\delta)}{|1-\delta|\beta} + O\bigg( \frac1{|1-\delta|\beta} \bigg) \bigg) \\
&\qquad{}+ \frac2{\beta^2} \bigg( \frac\beta{1+\delta} - |1-\delta|\beta + O(1) \bigg) \\
&{}= \frac2\beta \bigg( \frac1{1+\delta} - |1-\delta| - (1-\delta)\log|1-\delta^2| \bigg) + O\bigg( \frac1{\beta^2} \bigg),
\end{align*}
which establishes the lemma in this case. On the other hand, if $|1-\delta|\beta<1$ then the sum in question is
\begin{align*}
\frac{2(1-\delta)}\beta & \sum_{1\le\alpha<(1+\delta)^{-1}\beta} \frac1\alpha + \frac2{\beta^2} \sum_{1\le\alpha<(1+\delta)^{-1}\beta} 1 \\
&{}= \frac{2(1-\delta)}\beta \bigg( \log\frac\beta{1+\delta} + O(1) \bigg) + \frac2{\beta^2} \bigg( \frac\beta{1+\delta} + O(1) \bigg) \\
&{}= \frac2\beta \bigg( \frac1{1+\delta} - (1-\delta)\log(1+\delta) \bigg) + O\bigg( \frac1{\beta^2} + \frac{|1-\delta|\log\beta}\beta \bigg).
\end{align*}
We subtract $2(|1-\delta| + (1-\delta)\log|1-\delta|)/\beta$ from the main term and compensate in the error term to obtain
\begin{align*}
\frac{2(1-\delta)}\beta & \sum_{1\le\alpha<(1+\delta)^{-1}\beta} \frac1\alpha + \frac2{\beta^2} \sum_{1\le\alpha<(1+\delta)^{-1}\beta} 1 \\
&{}= \frac2\beta \bigg( \frac1{1+\delta} - |1-\delta| - (1-\delta)\log|1-\delta^2| \bigg) \\
&\qquad{} + O\bigg( \frac1{\beta^2} + \frac{|1-\delta|\log\beta}\beta + \frac{|1-\delta|+|1-\delta|\log|1-\delta|^{-1})}\beta \bigg) \\
&{}= \frac2\beta \bigg( \frac1{1+\delta} - |1-\delta| - (1-\delta)\log|1-\delta^2| \bigg) + O\bigg( \frac{1+\log\beta}{\beta^2} + \frac{|1-\delta|\log|1-\delta|^{-1})}\beta \bigg),
\end{align*}
since we are working with the assumption that $|1-\delta|< 1/\beta$.
Because the function $t\log t^{-1}$ is increasing on the interval $(0,1/e)$ and bounded on the interval $(0,1]$, we have $|1-\delta|\log|1-\delta|^{-1} < (1/\beta)\log\beta$ if $\beta>e$ and $|1-\delta|\log|1-\delta|^{-1} \ll1 \ll 1/\beta$ if $1\le \beta\le e$. In either case, the last error term can be simplified to $O((1+\log\beta)/\beta^2)$, which establishes the lemma in second case.
\end{proof}

\begin{proof}[Proof of Proposition~\ref{key proposition}]
We consider separately the four cases corresponding to the different parts of the definition~\eqref{definition of V(delta)} of $V(\delta)$.

{\em$\bullet$ Case 1: $0\le\delta<1$.} In this case we have $0 < 1-\delta < (1+\delta)^{-1} \le 1$ and
\[
C(\delta;\alpha,\beta) = \begin{cases}
\frac2\beta, & \text{if } \alpha\le(1-\delta)\beta, \\
\frac{1-\delta}\alpha + \frac1\beta, & \text{if } (1-\delta)\beta\le\alpha
\end{cases}
\quad\text{and}\quad
D(\delta;\alpha,\beta) = \begin{cases}
\frac2\beta, & \text{if } \alpha\le(1+\delta)^{-1}\beta, \\
\frac{1-\delta}\beta + \frac1\alpha, & \text{if } (1+\delta)^{-1}\beta\le\alpha.
\end{cases}
\]
Therefore
\begin{multline}
\sum_{1\le\alpha<\beta} C(\delta;\alpha,\beta) D(\delta;\alpha,\beta) = \sum_{1\le\alpha<(1-\delta)\beta} \frac2\beta \cdot \frac2\beta+ \sum_{\max\{1, (1-\delta)\beta\} \le \alpha < (1+\delta)^{-1}\beta} \bigg( \frac{1-\delta}\alpha + \frac1\beta \bigg) \frac2\beta \\
\qquad{} + \sum_{(1+\delta)^{-1}\beta \le \alpha < \beta} \bigg( \frac{1-\delta}\alpha + \frac1\beta \bigg) \bigg( \frac{1-\delta}\beta + \frac1\alpha \bigg).
\label{case 1 first step}
\end{multline}
(The first sum might be empty, but this does not invalidate the argument that follows.) The first sum is simply
\[
\frac4{\beta^2} \sum_{1\le\alpha<(1-\delta)\beta} 1 = \frac4{\beta^2} \big( (1-\delta)\beta + O(1) \big) = \frac{4(1-\delta)}\beta + O\bigg( \frac1{\beta^2} \bigg).
\]
By Lemma~\ref{lemma where lower summation limit could be 1}, the second sum is
\begin{multline*}
\sum_{\max\{1,(1-\delta)\beta\}\le\alpha<(1+\delta)^{-1}\beta} \bigg( \frac{1-\delta}\alpha + \frac1\beta \bigg) \frac2\beta \\
{}= \frac2\beta \bigg( \frac1{1+\delta} + \delta-1 - (1-\delta)\log (1-\delta^2) \bigg) + O\bigg( \frac{1+\log\beta}{\beta^2} \bigg),
\end{multline*}
while the third sum is
\begin{align}
\sum_{(1+\delta)^{-1}\beta \le \alpha < \beta} & \bigg( \frac{1-\delta}\alpha + \frac1\beta \bigg) \bigg( \frac{1-\delta}\beta + \frac1\alpha \bigg) \notag \\
&= (1-\delta) \sum_{(1+\delta)^{-1}\beta \le \alpha < \beta} \frac1{\alpha^2} + \frac{\delta^2-2\delta+2}\beta \sum_{(1+\delta)^{-1}\beta \le \alpha < \beta} \frac1\alpha + \frac{1-\delta}{\beta^2} \sum_{(1+\delta)^{-1}\beta \le \alpha < \beta} 1 \notag \\
&= (1-\delta) \bigg( \frac{1+\delta}\beta - \frac1\beta + O\bigg( \frac1{\beta^2} \bigg) \bigg) + \frac{\delta^2-2\delta+2}\beta \bigg( \log \frac{\beta}{(1+\delta)^{-1}\beta} + O\bigg( \frac1\beta \bigg) \bigg) \notag \\
&\qquad{}+ \frac{1-\delta}{\beta^2} \bigg( \beta - \frac{\beta}{1+\delta} + O(1) \bigg) \notag \\
&= \frac1\beta \bigg( 2-\delta^2 - \frac2{1+\delta} + (\delta^2-2\delta+2)\log(1+\delta) \bigg) + O\bigg( \frac1{\beta^2} \bigg).
\label{double duty evaluation}
\end{align}
This case of the proposition then follows from equation~\eqref{case 1 first step}, on noting that
\begin{multline*}
4-4\delta + \frac2{1+\delta} + 2\delta-2 - 2(1-\delta)\log(1-\delta^2) + 2-\delta^2 - \frac2{1+\delta} + (\delta^2-2\delta+2)\log(1+\delta) \\
= 4 - 2\delta - \delta^2 + \delta^2 \log(1+\delta) - 2(1-\delta)\log (1-\delta).
\end{multline*}

{\em$\bullet$ Case 2: $\delta=1$.} In this case we have
\[
C(\delta;\alpha,\beta) = \frac1\beta
\quad\text{and}\quad
D(\delta;\alpha,\beta) = \begin{cases}
2/\beta, & \text{if } \alpha\le\beta/2, \\
1/\alpha, & \text{if } \beta/2\le\alpha.
\end{cases}
\]
Therefore
\begin{align*}
\sum_{1\le\alpha<\beta} C(\delta;\alpha,\beta) D(\delta;\alpha,\beta) &= \sum_{1 \le \alpha < \beta/2} \frac1\beta \cdot \frac2\beta + \sum_{\beta/2 \le \alpha < \beta} \frac1\beta \cdot \frac1\alpha \\
&{}= \frac2{\beta^2} \bigg( \frac\beta2 + O(1) \bigg) + \frac1\beta \bigg( \log\frac\beta{\beta/2} + O\bigg( \frac1\beta \bigg) \bigg) \\
&= \frac{1 + \log2}\beta + O\bigg( \frac1{\beta^2} \bigg),
\end{align*}
as desired.

{\em$\bullet$ Case 3: $1<\delta\le\sqrt2$.} In this case we have
\[
C(\delta;\alpha,\beta) = \begin{cases}
0, & \text{if } \alpha\le(\delta-1)\beta, \\
\frac{1-\delta}\alpha + \frac1\beta, & \text{if } (\delta-1)\beta\le\alpha
\end{cases}
\quad\text{and}\quad
D(\delta;\alpha,\beta) = \begin{cases}
\frac2\beta, & \text{if } \alpha\le(\delta+1)^{-1}\beta, \\
\frac{1-\delta}\beta + \frac1\alpha, & \text{if } (\delta+1)^{-1}\beta\le\alpha.
\end{cases}
\]
Therefore
\begin{multline}
\sum_{1\le\alpha<\beta} C(\delta;\alpha,\beta) D(\delta;\alpha,\beta) = \sum_{\max\{1, (\delta-1)\beta\} \le \alpha < (\delta+1)^{-1}\beta} \bigg( \frac{1-\delta}\alpha + \frac1\beta \bigg) \frac2\beta \\
\qquad{}+ \sum_{(\delta+1)^{-1}\beta \le \alpha < \beta} \bigg( \frac{1-\delta}\alpha + \frac1\beta \bigg) \bigg( \frac{1-\delta}\beta + \frac1\alpha \bigg).
\label{case 3 first step}
\end{multline}
(We note that $(\delta-1)\beta \le (\delta+1)^{-1}\beta$ for $\delta$ between 1 and $\sqrt2$. For very small $\beta$ we might have $1>(\delta+1)^{-1}\beta$, in which case the first sum is empty, but that does not invalidate the argument that follows.) By Lemma~\ref{lemma where lower summation limit could be 1}, the first sum is
\begin{multline*}
\sum_{\max\{1,(\delta-1)\beta\}\le\alpha<(1+\delta)^{-1}\beta} \bigg( \frac{1-\delta}\alpha + \frac1\beta \bigg) \frac2\beta \\
{}= \frac2\beta \bigg( \frac1{1+\delta} + 1-\delta - (1-\delta)\log(\delta^2-1) \bigg) + O\bigg( \frac{1+\log\beta}{\beta^2} \bigg),
\end{multline*}
while the second sum has already been evaluated in equation~\eqref{double duty evaluation} above. This case of the proposition then follows from equation~\eqref{case 3 first step}, on noting that
\begin{multline*}
\frac2{1+\delta} + 2-2\delta - 2(1-\delta)\log(\delta^2-1) + 2-\delta^2 - \frac2{\delta+1} + (\delta^2-2\delta+2)\log(\delta+1) \\
= 4 - 2\delta - \delta^2 + \delta^2 \log(\delta+1) + 2(\delta-1)\log(\delta-1).
\end{multline*}

{\em$\bullet$  Case 4: $\sqrt2<\delta\le2$.} Just as in Case 3, we have
\[
C(\delta;\alpha,\beta) = \begin{cases}
0, & \text{if } \alpha\le(\delta-1)\beta, \\
\frac{1-\delta}\alpha + \frac1\beta, & \text{if } (\delta-1)\beta\le\alpha
\end{cases}
\quad\text{and}\quad
D(\delta;\alpha,\beta) = \begin{cases}
\frac2\beta, & \text{if } \alpha\le(\delta+1)^{-1}\beta, \\
\frac{1-\delta}\beta + \frac1\alpha, & \text{if } (\delta+1)^{-1}\beta\le\alpha.
\end{cases}
\]
However, the inequality $(\delta-1)\beta\le\alpha$ automatically implies that $(\delta+1)^{-1}\beta\le\alpha$ when $\delta\ge\sqrt2$. Therefore
\begin{equation*}
\sum_{1\le\alpha<\beta} C(\delta;\alpha,\beta) D(\delta;\alpha,\beta) = \sum_{(\delta-1)\beta \le \alpha < \beta} \bigg( \frac{1-\delta}\alpha + \frac1\beta \bigg) \bigg( \frac{1-\delta}\beta + \frac1\alpha \bigg).
\end{equation*}
(In this case we will not need to use the more precise lower bound $\max\{1,(\delta-1)\beta\} \le \alpha$.) This yields
\begin{align*}
\sum_{1\le\alpha<\beta} & C(\delta;\alpha,\beta) D(\delta;\alpha,\beta) \\
&= (1-\delta) \sum_ {(\delta-1)\beta \le \alpha < \beta} \frac1{\alpha^2} + \frac{\delta^2-2\delta+2}\beta \sum_ {(\delta-1)\beta \le \alpha < \beta} \frac1\alpha + \frac{1-\delta}{\beta^2} \sum_ {(\delta-1)\beta \le \alpha < \beta} 1 \\
&= (1-\delta) \bigg( \frac1{(\delta-1)\beta} - \frac1\beta + O\bigg( \frac1{(\delta-1)^2\beta^2} \bigg) \bigg) \\
&\qquad{}+ \frac{\delta^2-2\delta+2}\beta \bigg( \log \frac{\beta}{(\delta-1)\beta} + O\bigg( \frac1{(\delta-1)\beta} \bigg) \bigg) + \frac{1-\delta}{\beta^2} \bigg( \beta - (\delta-1)\beta + O(1) \bigg) \\
&= \frac1\beta \bigg( \delta^2 - 2\delta - (\delta^2-2\delta+2)\log(\delta-1) \bigg) + O\bigg( \frac1{\beta^2} \bigg),
\end{align*}
where the error terms have been simplified since $\delta-1$ is bounded away from~0.

This ends the proof of Proposition~\ref{key proposition}.
\end{proof}

\section{Distribution of real eigenvalues}
In proving Theorem~\ref{theorem: distribution of real eigenvalues}, it will be convenient to define the odd function
\begin{equation}
G(z) = \int_0^z -\log|t| \, dt = z(1-\log|z|), \label{G definition}
\end{equation}
whose relevance is demonstrated by the following lemma.

\begin{lemma}
If $B$ and $C$ are independent random variables uniformly distributed on $[-1,1]$, then the product $BC$ is a random variable whose distribution function is $F_{BC}(z) = \P(BC<z) = \tfrac{1}{2}(1+G(z))$ for $z \in [-1,1]$.
\label{product distribution}
\end{lemma}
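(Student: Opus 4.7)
The plan is to compute the density function $f_{BC}(y)$ of the product $BC$ using the standard change-of-variables formula for a product of independent random variables, and then integrate that density from $-1$ to $z$ to recover the distribution function, identifying the result with $\tfrac{1}{2}(1+G(z))$ via the definition of $G$.

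First, for $y\ne 0$ I would write
\[
f_{BC}(y) = \int_{-\infty}^{\infty} f_B(b)\, f_C(y/b)\, \frac{db}{|b|},
\]
where $f_B(b) = f_C(c) = \tfrac{1}{2}$ on $[-1,1]$ and vanishes outside. The product $f_B(b)f_C(y/b)$ is nonzero exactly when $|b|\le 1$ and $|y/b|\le 1$; for $|y| \le 1$ this pins $b$ to the two symmetric pieces $[-1,-|y|] \cup [|y|,1]$. Exploiting that symmetry collapses the integral to
\[
f_{BC}(y) = \frac{1}{2}\int_{|y|}^{1}\frac{db}{b} = -\frac{1}{2}\log|y|
\qquad (0 < |y| < 1),
\]
with $f_{BC}(y)=0$ for $|y|>1$.

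Second, I would obtain the CDF by integrating this density from $-1$ to $z\in[-1,1]$ and matching against the definition \eqref{G definition} of $G$:
\[
F_{BC}(z) \;=\; \int_{-1}^{z} -\tfrac{1}{2}\log|y|\,dy \;=\; \tfrac{1}{2}\bigl(G(z) - G(-1)\bigr).
\]
Since $G(-1) = -1\cdot(1 - \log 1) = -1$, this rearranges to $F_{BC}(z) = \tfrac{1}{2}(1 + G(z))$, which is the claimed formula. As consistency checks, $F_{BC}(-1)=0$ and $F_{BC}(1)=1$ both drop out of $G(\pm 1)=\pm 1$.

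I do not anticipate any substantive obstacle: this is essentially a textbook computation. The only minor subtlety is the behaviour at $y=0$, where $f_{BC}$ has an integrable logarithmic singularity that is harmless because it is a measure-zero point in the integration and $G$ is continuous there.
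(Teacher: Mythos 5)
Your proposal is correct and is essentially the paper's argument: both derive the density $-\tfrac{1}{2}\log|y|$ of $BC$ on $[-1,1]$ and then integrate it from $-1$ to $z$, identifying the result with $\tfrac{1}{2}(1+G(z))$. The only cosmetic difference is that the paper obtains the density by first computing $\P(|BC|<z)=\int_0^1\min\{1,z/s\}\,ds=G(z)$ and appealing to symmetry, whereas you use the product-density (Mellin convolution) formula directly.
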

Of course, for $z < -1$ we have $F_{BC}(z) = 0$, and likewise $F_{BC}(z) = 1$ for $z > 1$.
\begin{proof}
Note that $|B|$ and $|C|$ are uniformly distributed on $[0,1]$. For $0 \le z \le 1$, we easily check that $\P(|BC|<z) = \int_0^1 \min\{1,z/s\} \, ds = G(z)$.  Thus $|BC|$ is distributed on $[0,1]$ with density $f_{|BC|}(z) = -\log z$, and by symmetry $BC$ has density $f_{BC}(z) = -\tfrac{1}{2} \log|z|$ on $[-1,1]$.  The lemma follows upon computing $F_{BC}(z) = \int_{-1}^z f_{BC}(s) \, ds$.
\end{proof}

It will also be helpful to define the following functions, which are symmetric in $x$ and $y$:
\begin{align}
\nu_1(x,y) &= 1/2+G(xy)/2 + G((x-y)^2/4), \label{nu1 definition} \\
\nu_2(x,y) &= 1/2-G(xy)/2, \label{nu2 definition} \\
\nu(x,y)  &= \begin{cases}
\nu_1(x,y), &\text{if~}xy<1 \text{ and } x+y<0, \\
\nu_2(x,y), &\text{if~} xy<1 \text{ and } x+y>0, \\
1 + G((x-y)^2/4), &\text{if~} xy>1 \text{ and } x+y<0, \\
0, & \text{otherwise.}
\end{cases}
\label{nu definition}
\end{align}

To prove Theorem~\ref{theorem: distribution of real eigenvalues}, we first consider the distribution function $F_W(\delta) = \int_{t<\delta} W(t) \,dt$ associated to the density $W(\delta)$.  For a random matrix $M$ in $\Ntwo$ and a real number $\delta$, we will derive an expression for the expected number of real eigenvalues of $M$ falling below $\delta$, then differentiate it to obtain $W(\delta)$.

Since the set $\Ntwo$ is closed under negation, it is clear that $W(-\delta) = W(\delta)$, so it suffices to compute $W(\delta)$ for $\delta \in [0,2]$.  It turns out that our calculations for $F_W$ will be somewhat simplified by considering $F_W(-\delta)$ rather than $F_W(\delta)$.

\begin{proposition}
We have
\[
F_W(-\delta) = \frac{1}{4}\int_{-1+\delta}^{1+\delta} \int_{-1+\delta}^{1+\delta} \nu(x,y) \,dx\,dy
\]
for all $0\le \delta\le 2$, where $\nu$ is defined in equation~\eqref{nu definition}.
\label{F_W proposition}
\end{proposition}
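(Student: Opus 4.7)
The plan is to interpret $F_W(-\delta)$ probabilistically: it equals the expected number of real eigenvalues of a random $M\in\Ntwo$ lying strictly below $-\delta$. Writing the entries of $M$ as $a,b,c,d$ (with joint density $1/16$ on $[-1,1]^4$), I would substitute $x=a+\delta$ and $y=d+\delta$ and define $M'$ to be the matrix with diagonal $(x,y)$ and off-diagonal $(b,c)$. Since $M'=M+\delta I$, the real eigenvalues of $M$ below $-\delta$ correspond exactly to the negative real eigenvalues of $M'$. The change of variables has unit Jacobian, shifts the diagonal ranges to $x,y\in[-1+\delta,1+\delta]$, and leaves $b,c\in[-1,1]$. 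This reduces the proposition to proving
\[
\nu(x,y) \;=\; \frac{1}{4}\int_{-1}^{1}\!\int_{-1}^{1} N_-(x,y,b,c)\,db\,dc,
\]
where $N_-$ counts negative real eigenvalues of $M'$.

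Next I would classify $N_-$ using Lemma~\ref{discriminant}, which reads eigenvalue signs off the three quantities $\tr M'=x+y$, $\det M'=xy-bc$, and $\disc M'=(x-y)^2+4bc$. The algebraic identity $\disc M'+4\det M'=(x+y)^2$ shows that $\det M'<0$ automatically forces $\disc M'>0$, so (ignoring the measure-zero loci where $\det M'$ or $\disc M'$ vanishes, per Lemma~\ref{lemma: double eigenvalue}) one has
\[
N_- \;=\; \one{\det M'<0} \;+\; 2\cdot\one{\disc M'\ge0,\ \det M'>0,\ x+y<0}.
\]
Since $N_-$ depends on $b,c$ only through the product $T:=bc$, the inner double integral collapses to
\[
\P(T>xy) \;+\; 2\cdot\one{x+y<0}\cdot\P\!\left(-(x-y)^2/4\le T<xy\right),
\]
at which point I would apply Lemma~\ref{product distribution} to evaluate these probabilities.

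I would then verify that throughout $[-1+\delta,1+\delta]^2$ the quantity $(x-y)^2/4$ lies in $[0,1]$ (since $|x-y|\le2$) and $xy\ge-1$ (the extreme value $\delta^2-1$ arises when $\delta\in[0,1)$ and still exceeds $-1$; for $\delta\ge1$ one has $x,y\ge0$). Hence the closed form $F_T(z)=(1+G(z))/2$ from Lemma~\ref{product distribution} applies directly at $z=-(x-y)^2/4$, and at $z=xy$ with the trivial modification $F_T(z)=1$ when $z>1$. Substituting and simplifying in each of the four cases demarcated by the signs of $xy-1$ and $x+y$ yields exactly $\nu_1(x,y)$, $\nu_2(x,y)$, $1+G((x-y)^2/4)$, and $0$, matching the piecewise definition~\eqref{nu definition} of $\nu$.

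The main obstacle is the bookkeeping of these four sign cases: in each, a particular linear combination of $F_T(xy)$ and $F_T(-(x-y)^2/4)$ must collapse to the correct piece of $\nu$. The cleanest presentation is to compute $\P(T>xy)$ and $F_T(-(x-y)^2/4)$ in closed form just once and then tabulate the four cases. A subtlety worth flagging is the case $xy>1$ with $x+y<0$, which strictly occupies only a null subset of the integration domain (it would require both $x,y\le-1$), but the formula $1+G((x-y)^2/4)$ nevertheless emerges correctly from the same calculation, so Proposition~\ref{F_W proposition} holds as written.
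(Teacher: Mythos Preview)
Your proposal is correct and follows essentially the same route as the paper: shift by $\delta I$ to reduce to counting negative eigenvalues of $M'$, classify the count via Lemma~\ref{discriminant}, integrate out $b,c$ using Lemma~\ref{product distribution}, and match the resulting piecewise expression to the definition of $\nu$. The only cosmetic differences are that the paper derives $xy\ge -1$ from the identity $(x+y)^2-(x-y)^2=4xy$ rather than by minimizing $xy$ over the square, and it does not pause to note (as you do) that the case $xy>1$, $x+y<0$ is null on the integration domain.
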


\begin{proof}
Let us denote the entries of $M$ by the random variables $A$, $B$, $C$, $D$, which by assumption are independent and uniformly distributed in $[-1,1]$.  Let $\delta$ be fixed in the range $[0,2]$, and consider the shifted matrix $M' = M+\delta I$, which we write as
\[
M' = \begin{pmatrix} X & B \\ C & Y \end{pmatrix},
\]
where $B$, $C$ are as before, and $X$, $Y$ range independently and uniformly in $[-1+\delta,1+\delta]$.  Clearly the eigenvalues of $M$ less than $-\delta$ correspond to the negative (real) eigenvalues of $M'$.  By Lemma~\ref{lemma: double eigenvalue}, we are free to exclude the null set where $M'$ is singular or has repeated eigenvalues.  Outside of this null set, $M'$ has exactly one negative eigenvalue if and only if $\det M' = XY-BC < 0$, by Lemma~\ref{discriminant}(c). Likewise by Lemma~\ref{discriminant}(d), $M'$ has exactly two negative eigenvalues if and only if $XY-BC>0$ and $X+Y < 0$ and $\disc M' = (X-Y)^2 + 4BC > 0$. We thus have:
\begin{equation*}
F_W(-\delta) = \P(BC > XY) + 2\P(X+Y<0\text{ and } {-(X-Y)^2/4} < BC < XY).
\end{equation*}
We may express this probability as the average value
\[
F_W(-\delta) = \frac{1}{4}\int_{-1+\delta}^{1+\delta} \int_{-1+\delta}^{1+\delta} \rho(x,y) \,dx\,dy,
\]
where for fixed $x$ and $y$,
\begin{align}
\rho(x,y) &= \P(BC > xy) + 2\P(x+y<0\text{ and } {-(x-y)^2/4} < BC < xy) \notag \\
&= \P(BC > xy) + 2 \P(-(x-y)^2/4 < BC < xy) \one{x+y<0}
\label{first formula for rho}
\end{align}
(here $\one\cdot$ denotes the indicator function of the indicated relation). To complete the proof it suffices to show that $\rho$ equals the function $\nu$ defined in equation~\eqref{nu definition}.

The probabilities appearing in equation~\eqref{first formula for rho} are effectively given by Lemma~\ref{product distribution}.  However, there is some case-checking involved in applying this lemma, since the value of, say, $\P(BC > xy) = 1-F_{BC}(xy)$ depends on whether $xy < -1$, $-1 \le xy \le 1$, or $xy < -1$.  We make some observations to reduce the number of cases we need to examine.

Note that $(x-y)^2/4$ is bounded between 0 and 1 for any $x,y \in [-1+\delta,1+\delta]$, so that $-(x-y)^2/4$ always lies in the interval $[-1,1]$ prescribed by Lemma~\ref{product distribution}.  From the identity $(x+y)^2 - (x-y)^2 = 4xy$ we see also that $xy \ge -(x-y)^2/4$.  Thus $xy$ is never lower than $-1$, and we need only consider whether $xy>1$ (in which case $F_{BC}(xy)=1$).  We therefore have
\[
\P(BC > xy) = 1 - F_{BC}(xy) = \tfrac{1}{2}(1-G(xy)) \one{xy < 1}
\]
and
\begin{align*}
2 \P(-(x-y)^2/4 < BC < xy) & = 2 F_{BC}(xy) - 2 F_{BC}(-(x-y)^2/4) \\
& = \one{xy>1} + G(xy)\one{xy<1} + G((x-y)^2/4).
\end{align*}
Inserting these two evaluations into the formula~\eqref{first formula for rho}, we obtain
\begin{multline*}
\rho(x,y) = \tfrac{1}{2}(1-G(xy)) \one{xy < 1} \\
{}+ \big( \one{xy>1} + G(xy)\one{xy<1} + G((x-y)^2/4) \big) \one{x+y<0}.
\end{multline*}
It can be verified that this last expression is indeed equal to the right-hand side of the definition~\eqref{nu definition} of~$\nu$.
\end{proof}

Since $W(\delta) = W(-\delta) = -\frac{d}{d\delta} F_W(-\delta)$, to finish the proof of Theorem~\ref {theorem: distribution of real eigenvalues} it therefore suffices to prove that $-\frac{d}{d\delta} F_W(-\delta)$ equals the formula given in equation~\eqref {definition of W(delta)}.

\section{The derivative of the distribution}

Proposition~\ref {F_W proposition} expresses $F_W(\delta)$ as an integral, of a function $\nu$ that is independent of $\delta$, over the square $S_\delta = [-1+\delta,1+\delta]^2$. Since the region $S_\delta$ varies continuously with $\delta$, we can compute the derivative $-\frac{d}{d\delta} F_W(-\delta)$ by an appropriate line integral around the boundary  of $S_\delta$. Indeed, by the fundamental theorem of calculus, we have
\begin{align}
-\frac{d}{d\delta} F_W(-\delta) &= -\frac{1}{4} \frac{d}{d\delta} \bigg( \int_{-1+\delta}^{1+\delta} \int_{-1+\delta}^{1+\delta} \nu(x,y) \,dx\,dy \bigg) \notag \\
&= -\frac{1}{4} \bigg( \int_{-1+\delta}^{1+\delta} \nu(1+\delta,y)\,dy - \int_{-1+\delta}^{1+\delta} \nu(-1+\delta,y) \,dy \notag \\
&\qquad\qquad{}+ \int_{-1+\delta}^{1+\delta} \nu(x,1+\delta)\,dx - \int_{-1+\delta}^{1+\delta} \nu(x,-1+\delta) \,dx \bigg)\notag \\
          &= \frac{1}{2} \int_{-1+\delta}^{1+\delta} \nu(x,-1+\delta) \, dx -  \frac{1}{2} \int_{-1+\delta}^{1+\delta}  \nu(x,1+\delta) \, dx,
\label{eqn: key expression for W}
\end{align}
where we have used the symmetry $\nu(x,y) = \nu(y,x)$ to reduce the integral to just the top and bottom edges of $S_\delta$ (where $y=1+\delta$ and $y=-1+\delta$, respectively).

\begin{figure}[b]
\centering
\includegraphics[width= 1.75in]{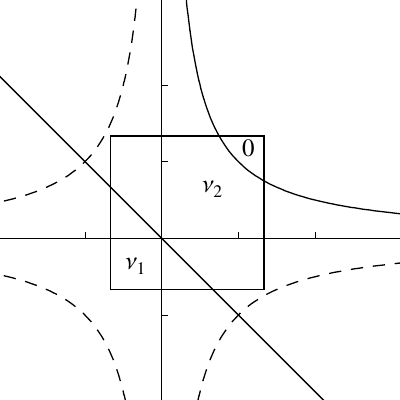}\hfill
\includegraphics[width= 1.75in]{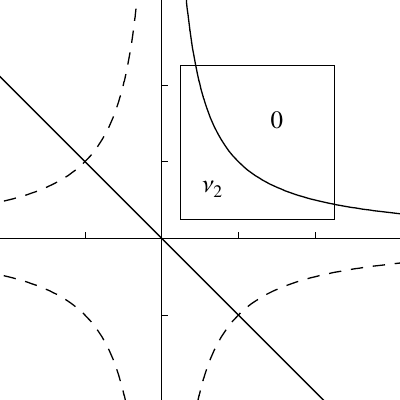}\hfill
\includegraphics[width= 1.75in]{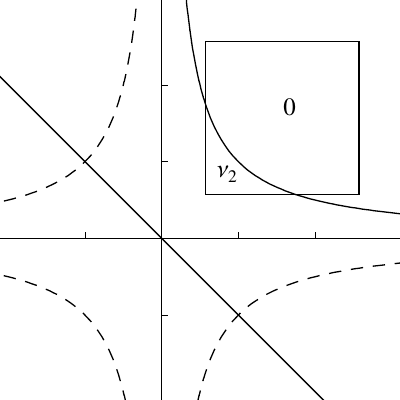}
\caption{The three cases: $0\le\delta\le1$, and $1\le\delta\le\sqrt2$, and $\sqrt2\le\delta\le2$}
\label{squares}
\end{figure}

The evaluation of~\eqref{eqn: key expression for W} divides into three cases depending on the behaviour of the indicator functions $\one{x+y<0}$ and $\one{xy<1}$ on the boundary of $S_\delta$ (see Figure~\ref{squares}).

\smallskip
{\em$\bullet$ Case 1: $0\le\delta\le 1$.}  For this range of $\delta$, the line $x+y=0$ intersects the bottom edge of $S_\delta$ at $x=1-\delta$, while the hyperbola $xy=1$ intersects the top edge at $x=(1+\delta)^{-1}$.
Thus by the definition of $\nu$, equation~\eqref {eqn: key expression for W} becomes
\[
-\frac{d}{d\delta} F_W(-\delta) = \frac{1}{2} \bigg( \int_{-1+\delta}^{1-\delta} \nu_1(x,-1+\delta) \, dx + \int_ {1-\delta}^{1+\delta} \nu_2(x,-1+\delta) \, dx - \int_{-1+\delta}^{(1+\delta)^{-1}}  \nu_2(x,1+\delta) \, dx \bigg).
\]

The following elementary antiderivatives, which are readily obtained by substitution and integration by parts, follow for any fixed nonzero real number~$y$ from the definitions~\eqref{G definition},~\eqref{nu1 definition}, and~\eqref{nu2 definition} of $G$, $\nu_1$, and $\nu_2$:
\begin{align}
\int \nu_1(x,y)\, dx &= \tfrac{1}{2}x + \tfrac{1}{8}{x^2 y} (3 - 2\log|xy|) + \tfrac{1}{36} (x-y)^3(5 - 6 \log |(x-y)/2|), \notag \\
\int \nu_2(x,y)\, dx &= \tfrac{1}{2}x - \tfrac{1}{8}{x^2 y} (3 - 2\log|xy|).
\label{nu2 antiderivative}
\end{align}

Therefore in this case
\begin{align*}
-\frac{d}{d\delta} F_W(-\delta) &= \frac{1}{2} \bigg( \Big( \tfrac{1}{2}x + \tfrac{1}{8}{x^2(-1+\delta)} (3 - 2\log|x(-1+\delta)|) \\
&\hskip1in {}+ \tfrac{1}{36} (x+1-\delta) ^3(5 - 6 \log |(x+1-\delta)/2|) \Big) \Big|_{x=-1+\delta}^{1-\delta} \\
&\qquad{}+  \big( \tfrac{1}{2}x - \tfrac{1}{8}{x^2 (-1+\delta)} (3 - 2\log|x(-1+\delta)|) \big) \Big|_{x=1-\delta}^{1+\delta} \\
&\qquad{}-  \big( \tfrac{1}{2}x - \tfrac{1}{8}{x^2(1+\delta)} (3 - 2\log|x(1+\delta)|) \big) \Big|_{x=-1+\delta}^{(1+\delta)^{-1}} \bigg)
\\
&= \frac{80 + 20\delta + 90\delta^2 + 52\delta^3 - 107\delta^4}{144(1+\delta)} - \frac{(5-7\delta+8\delta^2)(1-\delta)}{12} \log(1-\delta) \\ &  \qquad - \frac{\delta(1-\delta^2)}{4} \log(1+\delta)
\end{align*}
(after some algebraic simplification), which verifies the first case of Theorem~\ref{theorem: distribution of real eigenvalues}. (Note that the integrands really are continuous, despite terms that look like $\log0$, because the function $G$ is continuous at~0; hence evaluating the integrals by antiderivatives is valid.)

\smallskip
{\em$\bullet$ Case 2: $1\le\delta\le \sqrt{2}$.}  Now, the line $x+y=0$ does not intersect $S_\delta$, while the hyperbola $xy=1$ intersects the top edge at $x=(1+\delta)^{-1}$. Thus by the definition of $\nu$ and the antiderivative~\eqref{nu2 antiderivative} of $\nu_2$, equation~\eqref {eqn: key expression for W} becomes
\begin{align*}
-\frac{d}{d\delta} F_W(-\delta) &= \frac{1}{2} \bigg( \int_{-1+\delta}^{1+\delta} \nu_2(x,-1+\delta) \, dx - \int_{-1+\delta}^{(1+\delta)^{-1}}  \nu_2(x,1+\delta) \, dx \bigg) \\
&= \frac{1}{2} \bigg(  \big( \tfrac{1}{2}x - \tfrac{1}{8}{x^2 (-1+\delta)} (3 - 2\log|x(-1+\delta)|) \big) \bigg|_{x=-1+\delta}^{1+\delta} \\
&\qquad{}-  \big( \tfrac{1}{2}x - \tfrac{1}{8}{x^2(1+\delta)} (3 - 2\log|x(1+\delta)|) \big) \bigg|_{x=-1+\delta}^{(1+\delta)^{-1}} \bigg)
\\
&= \frac{\delta(20+10\delta-12\delta^2-3\delta^3)}{16(1+\delta)} + \frac{(3\delta-1)(\delta-1)}{4} \log(\delta-1)
 + \frac{\delta (\delta^2-1)}{4} \log(\delta+1),
\end{align*}
which verifies the second case of Theorem~\ref{theorem: distribution of real eigenvalues}.

\smallskip
{\em$\bullet$ Case 3: $\sqrt{2} < \delta\le 2$.}  As before, the line $x+y=0$ does not intersect $S_\delta$, while the hyperbola $xy=1$ intersects the bottom edge at $x=(\delta-1)^{-1}$. Thus by the definition of $\nu$ and the antiderivative~\eqref{nu2 antiderivative} of $\nu_2$, equation~\eqref {eqn: key expression for W} becomes
\begin{multline*}
-\frac{d}{d\delta} F_W(-\delta) = \frac{1}{2} \int_{-1+\delta}^ {(-1+\delta)^{-1}} \nu_2(x,-1+\delta) \, dx = \frac{1}{2}  \big( \tfrac{1}{2}x - \tfrac{1}{8}{x^2 (1+\delta)} (3 - 2\log|x(1+\delta)|) \big) \bigg|_{x=-1+\delta}^{(-1+\delta)^{-1}} \\
{}= \frac{\delta(\delta-2)(2-6\delta+3\delta^2)}{16(\delta-1)} - \frac{(\delta-1)^3}{4} \log(\delta-1),
\end{multline*}
which verifies the third case of Theorem~\ref{theorem: distribution of real eigenvalues}.

\smallskip
Since the last case of Theorem~\ref{theorem: distribution of real eigenvalues} is a consequence of Lemma~\ref {lemma: baby Gershgorin}, the proof of the theorem is complete.

\begin{remark}
One could also use the same method to extract the individual distributions of the greater and lesser eigenvalues of $M$: for instance, eliminating the factor of 2 from equation~\eqref{first formula for rho} would yield an expression for the distribution of just the lesser eigenvalue of~$M$.
\end{remark}

\end{document}